\documentclass[12pt]{amsart}

\usepackage{graphicx}%
\usepackage{multirow}%
\usepackage{amsmath,amssymb,amsfonts}%
\usepackage{amsthm}%
\usepackage{mathrsfs}%
\usepackage{hyperref}
\usepackage[english]{babel}
\usepackage{enumitem}
\usepackage{float}
\usepackage{lipsum}
\usepackage[foot]{amsaddr}
\usepackage{csquotes}

\usepackage{tikz}
\newcommand*\wtildea[1]{
  \begingroup
    \settowidth{\dimen0}{$#1$}
    \rlap{\resizebox{\dimen0}{\totalheight}{$\widetilde{\phantom{x\vphantom{#1}}}$}}%
  \endgroup
  #1}

\usepackage{comment}
\usepackage{bbm}

\usepackage[doi=false,isbn=false,url=false,eprint=false, giveninits=true, maxbibnames=99]{biblatex}

\addbibresource{bibliography.bib}

\newcommand{\tint}{\int_{\mathbb{T}}}
\newcommand{\bbt}{\mathbb{T}}
\newcommand{\pw}{\mathbb{P}(\omega)}
\newcommand{\infzt}{\inf_{z\in\mathbb{T}}}

\newcommand{\var}{\mathrm{var}}

\newtheorem{theorem}{Theorem}
\numberwithin{theorem}{section}

\newtheorem{proposition}[theorem]{Proposition}%
\newtheorem{definition}[theorem]{Definition}%
\newtheorem{remark}[theorem]{Remark}%

\newtheorem{lemma}[theorem]{Lemma}
\newtheorem{corollary}[theorem]{Corollary}

\newtheorem{example}[theorem]{Example}%

\usepackage{enumitem}
\setenumerate[1]{label={(\arabic*)}}

\DeclareUnicodeCharacter{0306}{{\u i}}

\title[Average metric entropy for random Blaschke products]{Average measure theoretic entropy for a family of expanding on average random Blaschke products}

\author{Cecilia González-Tokman}
\email{cecilia.gt@uq.edu.au}
\address{School of Mathematics and Physics, The University of Queensland, St Lucia, QLD 4072, Australia}
\author{Renee Oldfield}
\email{renee.oldfield@uq.edu.au}

\begin{document}
\begin{abstract}
This work gives a computable formula for the average measure theoretic entropy of a family of expanding on average random Blaschke products, generalizing \cite{expandPujals} to the random setting. In doing so, we describe the random invariant measure and associated measure theoretic entropy for a class of admissible random Blaschke products, allowing for maps which are not necessarily expanding and may even have an attracting fixed point.
\end{abstract}

\maketitle

\section{Introduction}

In the study of chaotic dynamical systems, it is natural to consider measure theoretic entropy, also known as metric entropy or Kolmogorov-Sinai entropy, a valuable measure theoretic invariant representing the complexity of the dynamical system. The notion of measure theoretic entropy is developed by Kolmogorov in \cite{og_m.e} and by Sinai in \cite{sinai_ent}. In general, the measure theoretic entropy is challenging to compute for a dynamical system, or even determine if it is zero or positive \cite{zero_pos_entropy}.

Measure theoretic entropy shares a connection with the positive Lyapunov exponents of a map. 
Pesin proves in \cite{pesin_1977} that for a diffeomorphism on a compact manifold, the measure theoretic entropy with respect to an ergodic probability measure absolutely continuous with respect to Lebesgue is equal to the sum of the positive Lyapunov exponents. In \cite{ruelle_entropy}, Ruelle proves the Margulis-Ruelle inequality, which asserts that for a continuously differentiable map the measure theoretic entropy is bounded above by the sum of the positive Lyapunov exponents. Ledrappier and Young give a formula for the metric entropy of a diffeomorphism in terms of its Lyapunov exponents in \cite{LY1,LY2}. In \cite{walters_inner}, Walters proves a convergence theorem for the Perron-Frobenius operators of a family of distance-expanding maps and uses this to investigate the invariant measures and equilibrium states, the measures of maximal (measure theoretic) entropy, of these maps. 

In \cite{bogenschutz_1992}, Bogenschütz shows that some properties of the measure-theoretic entropy for a map in the deterministic case extend to the random setting. As a corollary to the more general result, he proves a variational principle for a random dynamical system. Kifer establishes the existence and uniqueness of equilibrium states for smooth, expanding random maps in \cite{kifer_expand}. Urbański and Simmons investigate the equilibrium states of random distance expanding maps and establish a variational principle in \cite{SU2014}. In \cite{AFGV}, Atnip, Froyland, González-Tokman, and Vaienti establish a variational principle and the existence of relative equilibrium states for random open and closed interval maps, in addition to proving a random Ruelle-Perron-Frobenius theorem. 

Obtaining results for systems that are expanding on average, rather than uniformly expanding, enables the investigation of a much broader class of systems. Kifer and Khanin extend the results from \cite{kifer_expand} to the case where the maps are expanding on average in \cite{KK_average}. Buzzi establishes the existence of absolutely continuous SRB measures for random expanding on average Lasota-Yorke maps under certain conditions in \cite{buzzi_SRB} and the uniqueness of such a measure under a covering condition in \cite{buzzi_edoc}. Mayer, Skorulski, and Urbański establish a Ruelle-Perron-Frobenius theorem for expanding random maps and use inducing on first return times to show that these results extend to random expanding on average cocycles \cite{urbanski}. 
Froyland, González-Tokman and Murray establish the stability of eventually expanding on average interval map cocycles under a variety of perturbations, including Ulam's method, in \cite{FGTR_average}. Atnip, Froyland, González-Tokman and Vaienti develop a quenched thermodynamic formalism for random dynamical systems that satisfy a random covering condition in \cite{AFGV2}.

A particular class of functions of interest are Blaschke products. These functions, comprised of products of Möbius transformations mapping the unit circle to itself, are amenable to rigorous analysis. As such, Blaschke products are a natural choice for investigation. They have connections to operator theory, hyperbolic geometry, and dynamics. The various applications and connections of Blaschke products are discussed extensively in \cite{Blaschke_connect,Blaschke_applications}. In \cite{Martin_expanding}, Martin obtains a sufficient condition on the zeros of a Blaschke product for it to be uniformly expanding on the unit circle, and describes the measure theoretic entropy for these Blaschke products. The entropy for expanding Blaschke products is also investigated in \cite{Blaschke_graph}. Bandtlow, Just, and Slipantschuk explicitly determine the spectrum of Perron-Frobenius operators for these uniformly expanding Blaschke products in \cite{Slipantschuk_2017}.

In \cite{GTQ}, González-Tokman and Quas completely characterize the Lyapunov spectrum of the Perron-Frobenius operator cocycle of a uniformly expanding random Blaschke product cocycle, generalizing \cite{Slipantschuk_2017} to the random setting. They also obtain necessary and sufficient conditions for the stability of its spectrum, showing that small perturbations can lead to a collapse of the spectrum. Blaschke product cocycles fixing the origin are investigated by González-Tokman and Peters in \cite{JPGT}. 

Pujals, Robert, and Shub describe the metric entropy for a general Blaschke product and prove a lower bound for the average measure theoretic entropy of a family of Blaschke products consisting of a fixed Blaschke product $T$ composed with rotations, that is, the family $(\theta T)_{\theta\in\mathbb{T}}$ \cite{expandPujals}. 

This paper aims to extend the results of \cite{expandPujals} to the random setting.  This would allow us to obtain information on the measure theoretic entropy with only knowledge of the maps, rather than $\omega$-dependent orbits, in particular, without needing to explicitly find the random invariant measure for each cocycle.

Before stating our results, we first define a transfer operator. Let $C(\mathbb{T})$ denote the space of continuous functions on the complex unit circle $\mathbb{T}.$
\begin{definition}[Transfer operator]\label{transfer_operator}
     Let $(X, \mathcal{B}, m)$ be a measure space and $T:X\to X$ a nonsingular transformation.
     The unique operator $\mathcal{L}_{T}:L^{1}(m)\to L^{1}(m)$ satisfying $$\int_{A}\mathcal{L}_{T}f\, dm=\int_{T^{-1}(A)}f\,dm$$ for all $f\in L^{1}(m)$ and $A\in\mathcal{B}$ is called the transfer operator (associated to $T$).
\end{definition}

For our first main result, we give a formula for the measure theoretic entropy of an admissible Blaschke product cocycle (Definition \ref{admissible_BP}). We consider a probability space $(\Omega, \mathcal{F},\mathbb{P})$ with invertible, ergodic, $\mathbb{P}$-preserving transformation $\sigma:\Omega\to\Omega$ and a family of maps $\mathcal{T}:=(T_{\omega})_{\omega\in\Omega}.$ For each $\omega\in\Omega,$ let $\mathcal{L}_{\omega}:=\mathcal{L}_{T_{\omega}}.$

\begin{theorem}\label{buzzi_blaschke}

    Let $\sigma:\Omega\to\Omega$ be an invertible, ergodic, measure-preserving transformation of a probability space $(\Omega, \mathcal{F}, \mathbb{P}).$ Let $(\mathcal{T}, \sigma)$ be an admissible Blaschke product cocycle. Let $m$ denote the Lebesgue measure.
    Then, there exists a unique random absolutely continuous invariant measure (acim) $\mu,$ with disintegration $\{\mu_{\omega}\}_{\omega\in\Omega}$ with respect to $\mathbb{P},$ such that \begin{enumerate}[label=(\roman*)]
        \item There exists a measurable map $x:\Omega\to D$ such that for $\mathbb{P}$-a.e. $\omega\in\Omega,$ $x_{\omega}=\lim_{n\to\infty}T_{\sigma^{-n}\omega}^{(n)}(z)$ for all $z\in D,$ and $h_{\omega}:=\frac{d\mu_{\omega}}{dm}=P_{x_{\omega}}  (z)=\frac{1-|x_{\omega}|^2}{|z-x_{\omega}|^2}, z\in\mathbb{T}.$ Moreover, $\mathcal{L}_{\omega}h_{\omega}=h_{\sigma\omega}$ for $\mathbb{P}$-a.e. $\omega\in\Omega;$ and \label{main_thm_rfp} 
        \item For $\mathbb{P}$-a.e. $\omega\in\Omega$ and any $h_{*}$ such that $||h_{*}||_{1}=1$ and $h_{*}$ is of bounded variation, there exists $n_{0}(\omega)\in\mathbb{N}$ and $\rho<1$ such that for all $n\ge n_{0}(\omega)$ $||\mathcal{L}_{\sigma^{-n}\omega}^{(n)}h_{*}-h_{\omega}||_{\infty}\le\rho^n.$ \label{main_thm_diff_bound}

    \end{enumerate}
\end{theorem}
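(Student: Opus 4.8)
The plan is to separate the algebraic content of the statement --- which pins the invariant density down to a Poisson kernel based at a random point --- from the analytic content --- existence of that point, uniqueness of the acim, and the exponential memory-loss bound --- handling the former by a short harmonic-measure computation and the latter by the random Ruelle--Perron--Frobenius / Lasota--Yorke machinery. First I would record the deterministic identity
\[
\mathcal{L}_{T}\,P_{a}=P_{T(a)}\qquad\text{for every finite Blaschke product }T\text{ and every }a\in D,
\]
with $\mathcal{L}_{T}$ as in Definition~\ref{transfer_operator} taken with respect to normalised Lebesgue measure $m$ on $\mathbb{T}$. This holds because $P_{a}\,dm$ is the harmonic measure of the disk at $a$: since $T$ extends to a proper holomorphic self-map of $\overline{D}$ carrying $\mathbb{T}$ onto $\mathbb{T}$, the function $(\mathcal{P}u)\circ T$ (where $\mathcal{P}u$ denotes the Poisson extension of $u\in C(\mathbb{T})$) is harmonic on $D$, continuous on $\overline{D}$, and has boundary values $u\circ T$, hence equals $\mathcal{P}(u\circ T)$; evaluating at $a$ gives $\int_{\mathbb{T}}u\circ T\,P_{a}\,dm=\int_{\mathbb{T}}u\,P_{T(a)}\,dm$, i.e.\ $T_{*}(P_{a}\,dm)=P_{T(a)}\,dm$, which is $\mathcal{L}_{T}P_{a}=P_{T(a)}$ since $\mathcal{L}_{T}(f\,dm)=T_{*}(f\,dm)$. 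Consequently, \emph{provided} one can produce a measurable $x:\Omega\to D$ with $T_{\omega}(x_{\omega})=x_{\sigma\omega}$ for $\mathbb{P}$-a.e.\ $\omega$, the functions $h_{\omega}:=P_{x_{\omega}}$ are nonnegative, of bounded variation, satisfy $\int_{\mathbb{T}}h_{\omega}\,dm=1$, and obey $\mathcal{L}_{\omega}h_{\omega}=h_{\sigma\omega}$, so they are the fibrewise densities of a random acim $\mu$.

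Next I would construct the equivariant point. Put $x_{\omega}^{(n)}:=T^{(n)}_{\sigma^{-n}\omega}(0)$. By the Schwarz--Pick lemma each $T_{\omega}$ is $1$-Lipschitz for the hyperbolic metric $\rho_{D}$ of $D$, strictly contracting it wherever $T_{\omega}$ is not an automorphism. The admissibility hypothesis of Definition~\ref{admissible_BP} --- in particular its expanding-on-average requirement on $\mathbb{T}$ --- keeps the backward orbit from escaping to $\mathbb{T}$ and confines it, for $\mathbb{P}$-a.e.\ $\omega$, to a fixed compact subset of $D$; applying the Birkhoff ergodic theorem to the logarithm of a hyperbolic contraction coefficient of the compositions on that compact set then shows that $\rho_{D}(x_{\omega}^{(n+1)},x_{\omega}^{(n)})$ is $\mathbb{P}$-a.s.\ summable, equivalently that the nested images $T^{(n)}_{\sigma^{-n}\omega}(D)$ shrink hyperbolically to a single point $x_{\omega}$. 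Then $\omega\mapsto x_{\omega}$ is measurable as a pointwise limit of measurable maps; $x_{\omega}=\lim_{n}T^{(n)}_{\sigma^{-n}\omega}(z)$ holds for \emph{every} $z\in D$ because once the images are small the limit no longer depends on the base point; and the cocycle relation $T_{\omega}\circ T^{(n)}_{\sigma^{-n}\omega}=T^{(n+1)}_{\sigma^{-(n+1)}(\sigma\omega)}$ forces $T_{\omega}(x_{\omega})=x_{\sigma\omega}$. Together with the first step this establishes part~\ref{main_thm_rfp}.

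Finally, for part~\ref{main_thm_diff_bound} and uniqueness I would check that an admissible Blaschke product cocycle fits the quenched Lasota--Yorke framework. Restricted to $\mathbb{T}$ each $T_{\omega}$ is a piecewise-monotone, piecewise real-analytic circle map, and $|T_{\omega}'|$ on $\mathbb{T}$ equals the sum of Poisson kernels $\sum_{j}P_{a_{j}^{\omega}}$ at the zeros $a_{j}^{\omega}$ of $T_{\omega}$, hence is bounded below by a positive constant and has no critical point on $\mathbb{T}$; this yields a Lasota--Yorke inequality $\var(\mathcal{L}_{\omega}f)\le\alpha_{\omega}\var(f)+\beta_{\omega}\|f\|_{1}$ with $\int_{\Omega}\log\alpha_{\omega}\,d\mathbb{P}<0$ (from the expanding-on-average assumption), while the random covering condition follows because finite Blaschke products are onto $\mathbb{T}$ and the branch structure forces covering after finitely many steps. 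The results of Buzzi~\cite{buzzi_SRB,buzzi_edoc} and the quenched thermodynamic formalism of~\cite{AFGV,AFGV2,FGTR_average} then provide: (a) uniqueness of the random acim --- first among random BV densities via the random Ruelle--Perron--Frobenius theorem, then among all $L^{1}$ disintegrations using the $L^{1}$-contraction of the transfer cocycle granted by the covering condition --- which, by the previous steps, identifies it with the one of density $P_{x_{\omega}}$; and (b) a deterministic $\rho_{1}<1$ and a $\mathbb{P}$-a.s.\ finite constant $C(\omega)$ (depending on $h_{*}$) with $\|\mathcal{L}^{(n)}_{\sigma^{-n}\omega}h_{*}-h_{\omega}\|_{\mathrm{BV}}\le C(\omega)\rho_{1}^{n}$ for all $n$ and all $h_{*}\in\mathrm{BV}$ with $\|h_{*}\|_{1}=1$. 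Since $\|g\|_{\infty}\le\|g\|_{1}+\var(g)$ for $g\in\mathrm{BV}(\mathbb{T})$, the same bound (with a constant absorbed into $C(\omega)$) holds for $\|\cdot\|_{\infty}$, and taking any $\rho\in(\rho_{1},1)$ and then $n_{0}(\omega)$ large enough that $C(\omega)\rho_{1}^{n}\le\rho^{n}$ for $n\ge n_{0}(\omega)$ yields part~\ref{main_thm_diff_bound}. I expect the main obstacle to be precisely this last step: extracting from Definition~\ref{admissible_BP} the exact package --- measurability, an averaged Lasota--Yorke inequality with logarithmically negative multiplier, temperedness of the constants, and a random covering property --- under which the random Ruelle--Perron--Frobenius machinery delivers both uniqueness and exponential memory loss, and then upgrading its BV conclusion to the stated uniform estimate. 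A secondary delicate point is the interior-contraction argument above, where the allowance of individual non-expanding maps (even rotations, or maps with an attracting interior fixed point) must be reconciled with the need for genuine hyperbolic contraction of the composition along $\mathbb{P}$-a.e.\ fibre --- the expanding-on-average condition on $\mathbb{T}$ being what prevents escape to the boundary and so guarantees a strictly contracting interior region.
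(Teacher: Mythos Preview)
Your overall structure is sound and you have all the right ingredients, but the order you take is reversed from the paper's, and your middle step---constructing the equivariant point $x_\omega$ directly via Schwarz--Pick contraction---contains a genuine gap that the paper's ordering sidesteps.

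The gap is the assertion that the admissibility hypothesis ``confines the backward orbit, for $\mathbb{P}$-a.e.\ $\omega$, to a fixed compact subset of $D$.'' You flag this as delicate but do not resolve it, and it is not obvious. The expanding-on-average condition $\int_{\Omega}\log\inf_{z\in\mathbb{T}}|T'_\omega(z)|\,d\mathbb{P}>0$ is a statement about the \emph{boundary}; converting it into an \emph{interior} non-escape bound is not automatic. Near $\partial D$ the hyperbolic contraction coefficient of a non-automorphism Blaschke product degenerates to $1$, so one cannot apply Birkhoff to a logarithmic contraction rate without first knowing the orbit stays in a compact set---which is exactly what is to be proved. (Degree-$1$ maps, which the admissibility definition permits, are hyperbolic isometries and contribute no contraction at all.) You have correctly identified the obstacle but misjudged where the weight lies: the Buzzi verification you call the ``main obstacle'' is in fact routine, while this interior argument is the hard part of your route.

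The paper reverses the order and thereby avoids the obstacle. It \emph{first} checks that Definition~\ref{admissible_BP} implies the hypotheses of Buzzi's theorem (Theorem~\ref{buzzi}), including the random covering condition via Lemma~\ref{covering}, obtaining directly a unique random BV density $h_\omega$ and the exponential bound of part~\ref{main_thm_diff_bound}. \emph{Then} it identifies $h_\omega$: taking $h_*=1=P_0$, your identity $\mathcal{L}_T P_a=P_{T(a)}$ gives $\mathcal{L}^{(n)}_{\sigma^{-n}\omega}1=P_{T^{(n)}_{\sigma^{-n}\omega}(0)}$, so $P_{x_\omega^{(n)}}\to h_\omega$ in $L^\infty$. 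Any subsequential limit $y_\omega\in\overline{D}$ of $x_\omega^{(n)}$ must lie in $D$, since $y_\omega\in\mathbb{T}$ would force $\|P_{x_\omega^{(n)}}\|_\infty\to\infty$ (Lemma~\ref{l_infty}), contradicting $\|h_\omega\|_\infty<\infty$ (BV densities are bounded). Lipschitz continuity of $a\mapsto P_a$ on compacta (Lemma~\ref{poisson_diff}) together with uniqueness of $h_\omega$ then forces the full sequence to converge, and repeating with $h_*=P_z$ for arbitrary $z\in D$ gives the same limit. Thus the boundedness of the Buzzi density is what replaces your unproved non-escape claim: the analytic machinery establishes the random fixed point, rather than the other way around.
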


For our second main result, we use Theorem \ref{buzzi_blaschke} to obtain a formula for the average measure theoretic entropy for a 1-parameter family of admissible Blaschke product cocycles, extending \cite{expandPujals} to the expanding on average random setting.

\begin{theorem}[Average metric entropy of an admissible Blaschke product cocycle]\label{main_thm2} Let $(\mathcal{T}_{1}:=\mathcal{T}, \sigma)$ be as in Theorem \ref{buzzi_blaschke}.
     For fixed $\theta\in\mathbb{T},$ let $\mathcal{T}_{\theta}:=(\theta T_{\omega})_{\omega\in\Omega}.$ Let $h_{\mathbb{P}}(\sigma)$ be the measure theoretic entropy of the base dynamical system with respect to $\mathbb{P}.$ Then, the average metric entropy over $\theta\in\mathbb{T}$ of the family of cocycles $(\mathcal{T}_{\theta})_{\theta\in\mathbb{T}}$, $\bar{h}(\mathcal{T}):=\tint h^{fib}_{\mu_{\theta}}(\mathcal{T}_{\theta})d\theta+h_{\mathbb{P}}(\sigma),$ is given by $$\bar{h}(\mathcal{T})=\int_{\Omega}\int_{\mathbb{T}}\log|T_{\omega}'|\,dm \,d\mathbb{P}(\omega)+h_{\mathbb{P}}(\sigma),$$ where $\mu_{\theta}$ is the random invariant measure for $\mathcal{T}_{\theta}$ as in Theorem \ref{buzzi_blaschke}.
\end{theorem}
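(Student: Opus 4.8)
The argument follows the scheme of \cite{expandPujals}. By definition $\bar h(\mathcal T)-h_{\mathbb{P}}(\sigma)=\int_{\mathbb{T}}h^{fib}_{\mu_\theta}(\mathcal T_\theta)\,d\theta$, so the plan is to (i) express the fibre entropy $h^{fib}_{\mu_\theta}(\mathcal T_\theta)$ as a fibrewise integral of $\log|T_\omega'|$ against $\mu_\theta$, (ii) prove the averaging identity
\[
\int_{\mathbb{T}} h_{\theta,\omega}(z)\,d\theta = 1 \qquad \text{for } \mathbb{P}\text{-a.e. } \omega \text{ and } m\text{-a.e. } z\in\mathbb{T},
\]
and (iii) combine the two via Fubini so that the $\theta$-average passes onto the densities and collapses them to $1$.

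For step (i): post-composing each $T_\omega$ with the rotation $z\mapsto\theta z$ changes neither the degree of $T_\omega$ nor the modulus $|T_\omega'|$ on $\mathbb{T}$, so $\mathcal T_\theta=(\theta T_\omega)_{\omega\in\Omega}$ is again an admissible Blaschke product cocycle, and Theorem~\ref{buzzi_blaschke} applies to it: it produces a random acim $\mu_\theta$ with disintegration $h_{\theta,\omega}=\frac{d\mu_{\theta,\omega}}{dm}=P_{x_{\theta,\omega}}$, where $x_{\theta,\omega}=\lim_{n\to\infty}(\theta T_{\sigma^{-1}\omega})\circ\cdots\circ(\theta T_{\sigma^{-n}\omega})(0)\in D$. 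Since $|(\theta T_\omega)'|\equiv|T_\omega'|$ on $\mathbb{T}$, the Rokhlin-type entropy formula for admissible Blaschke product cocycles gives
\[
h^{fib}_{\mu_\theta}(\mathcal T_\theta)=\int_\Omega\int_{\mathbb{T}}\log|T_\omega'(z)|\,h_{\theta,\omega}(z)\,dm(z)\,d\mathbb{P}(\omega).
\]

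Step (ii) is the heart of the matter. Fix $n\in\mathbb{N}$ and put $\Psi_{n,\omega}(\theta):=(\theta T_{\sigma^{-1}\omega})\circ\cdots\circ(\theta T_{\sigma^{-n}\omega})(0)$. This is a rational function of $\theta$, and since every Blaschke factor maps $\overline{D}$ into $D$ once the innermost argument $0\in D$, we have $\Psi_{n,\omega}(\overline{D})\subseteq D$; in particular $\Psi_{n,\omega}$ has no poles on $\overline{D}$, and, crucially, $\Psi_{n,\omega}(0)=0$ because its outermost factor $w\mapsto 0\cdot T_{\sigma^{-1}\omega}(w)$ vanishes identically. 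Hence for each $z\in\mathbb{T}$ the map $\theta\mapsto\frac{z+\Psi_{n,\omega}(\theta)}{z-\Psi_{n,\omega}(\theta)}$ is holomorphic on a neighbourhood of $\overline{D}$, so by the mean value property $\int_{\mathbb{T}}\frac{z+\Psi_{n,\omega}(\theta)}{z-\Psi_{n,\omega}(\theta)}\,d\theta=\frac{z+\Psi_{n,\omega}(0)}{z-\Psi_{n,\omega}(0)}=1$; taking real parts and using $P_x(z)=\mathrm{Re}\,\frac{z+x}{z-x}$ for $z\in\mathbb{T},\,x\in D$, we get $\int_{\mathbb{T}}P_{\Psi_{n,\omega}(\theta)}(z)\,d\theta=1$ for every $n$. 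By part~\ref{main_thm_rfp} of Theorem~\ref{buzzi_blaschke} applied to $\mathcal T_\theta$, $\Psi_{n,\omega}(\theta)\to x_{\theta,\omega}$ pointwise in $\theta\in\mathbb{T}$, so since $P_{\Psi_{n,\omega}(\theta)}(z)\ge0$, Fatou's lemma yields $\int_{\mathbb{T}}P_{x_{\theta,\omega}}(z)\,d\theta\le1$ for every $z\in\mathbb{T}$. Integrating this in $z$ and using Tonelli together with the normalization $\int_{\mathbb{T}}P_x(z)\,dm(z)=1$ of the Poisson kernel gives $\int_{\mathbb{T}}\big(\int_{\mathbb{T}}P_{x_{\theta,\omega}}(z)\,d\theta\big)\,dm(z)=1$, which forces $\int_{\mathbb{T}}P_{x_{\theta,\omega}}(z)\,d\theta=1$ for $m$-a.e. $z$, i.e. the averaging identity.

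Finally, for step (iii) I would integrate the fibre-entropy formula over $\theta$; the averaging identity together with the integrability of $\log|T_\omega'|$ built into admissibility justifies Fubini, and
\[
\int_{\mathbb{T}}h^{fib}_{\mu_\theta}(\mathcal T_\theta)\,d\theta=\int_\Omega\int_{\mathbb{T}}\log|T_\omega'(z)|\Big(\int_{\mathbb{T}}h_{\theta,\omega}(z)\,d\theta\Big)\,dm(z)\,d\mathbb{P}(\omega)=\int_\Omega\int_{\mathbb{T}}\log|T_\omega'(z)|\,dm(z)\,d\mathbb{P}(\omega);
\]
adding $h_{\mathbb{P}}(\sigma)$ to both sides gives the stated formula. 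I expect the main obstacle to be step (ii): the limiting density $P_{x_{\theta,\omega}}$ is no longer holomorphic in $\theta$, and the Poisson kernel is unbounded near $\mathbb{T}$, so one cannot interchange the $n\to\infty$ limit with the $\theta$-average by naive domination — the Fatou-plus-normalization trick above is exactly what replaces the clean holomorphy argument available in the deterministic case of \cite{expandPujals}. The remaining points are routine: checking that admissibility is inherited by $\mathcal T_\theta$, the joint measurability of $(\theta,\omega,z)\mapsto x_{\theta,\omega}$ (a pointwise limit of maps continuous in $\theta$ and measurable in $\omega$) and of $\theta\mapsto h^{fib}_{\mu_\theta}(\mathcal T_\theta)$ needed for the Fubini/Tonelli steps, and recording the fibrewise Rokhlin formula used in step (i).
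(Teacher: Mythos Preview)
Your proof is correct and shares the same skeleton as the paper's, but organizes the key step~(ii) differently. The paper (Proposition~\ref{average} together with Lemma~\ref{Leb_rand}) proves the averaging identity at the level of \emph{test functions}: it writes $\int_{\mathbb{T}} f_\omega\,d\mu_{\omega,\theta}=\tilde f_\omega(x_{\omega,\theta})$ via the harmonic extension, then uses dominated convergence in $\theta$ (legitimate because $|\tilde f_\omega|\le\|f_\omega\|_\infty$ uniformly on $D$) to pass the limit inside, and finally applies the same holomorphy-in-$\theta$/$\Psi_{n,\omega}(0)=0$ observation you use to get $\int_{\mathbb{T}}\tilde f_\omega(\Psi_{n,\omega}(\theta))\,d\theta=\tilde f_\omega(0)=\int_{\mathbb{T}} f_\omega\,dm$. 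You instead prove the averaging identity at the level of \emph{densities}, $\int_{\mathbb{T}} P_{x_{\theta,\omega}}(z)\,d\theta=1$, where no dominating function is available for the Poisson kernel; your Fatou-plus-normalization trick is precisely what replaces the paper's dominated convergence. Both arguments rest on the same mean-value computation for $\Psi_{n,\omega}$. Your route gives a slightly stronger intermediate statement (the pointwise density identity, which could be reused elsewhere), at the price of a more indirect limit argument; the paper's route is more streamlined because boundedness of $\tilde f_\omega$ makes the interchange of limit and $\theta$-integral immediate and sidesteps the unboundedness of $P_x$ altogether.
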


In Section \ref{prelim} we give relevant definitions and results that will be used throughout the paper. In Section \ref{entropy_section}, we use the results of Buzzi \cite{buzzi_SRB, buzzi_edoc} to prove Theorem \ref{buzzi_blaschke}. In Section \ref{average_section}, we use Theorem \ref{buzzi_blaschke} and the techniques of \cite{expandPujals} to prove Theorem \ref{main_thm2}.

In Section \ref{examples} we give an example of an admissible Blaschke product cocycle and numerically approximate the measure theoretic entropy of $\mathcal{T}_{\theta}$ for two different drivings and compare the numerically approximated average entropy over $\theta\in\mathbb{T}$ to the analytically computed value. By considering a Blaschke product cocycle fixing the origin, we also show that the conditions on admissible Blaschke product cocycles are sufficient but not necessary for the conclusions of Theorem \ref{main_thm2} to hold. 

\section{Preliminaries}\label{prelim}

\subsection{Finite Blaschke products and their acims}

In this section, we give relevant definitions and results about Blaschke products, their acims, and their measure theoretic entropy.

Let $D=\{z\in\mathbb{C}:|z|<1\}$ denote the complex unit disc and $\mathbb{T}=\{z\in\mathbb{C}:|z|=1\}$ the complex unit circle.
\begin{definition}[Inner function]
     Let $f:D\to\mathbb{C}$ be a bounded, analytic function, and $f^{*}(z):=\lim_{r\nearrow1}f(rz),$ provided this limit exists. We call $f$ an inner function if $|f^*(z)|=1$ for a.e. $z\in\mathbb{T}.$
\end{definition}

One class of inner functions of interest for this work is Blaschke products, holomorphic functions mapping the unit circle/disc to itself, given by the following definition.
\begin{definition}[Finite Blaschke product]\label{blaschke_defn}
    Let $\hat{\mathbb{C}}=\mathbb{C}\cup\{\infty\}.$ Fix $n\ge1.$ A map $T:\hat{\mathbb{C}}\to\hat{\mathbb{C}}$ given by $$T(z)=\theta_{0}\prod_{i=1}^{n}\frac{z-a_{i}}{1-\bar{a}_{i}z},$$ where $\theta_{0}\in\bbt$ and $a_{i}\in D,$ is called a degree$-n$ Blaschke product, or simply a (finite) Blaschke product.

\end{definition}

For $z\in\mathbb{T},$ we have $$|T'(z)|=\sum_{i=1}^{n}\frac{1-|a_{i}|^2}{|z-a_{i}|^2}>0.$$
    
 Let $Q:[0,1)\to\mathbb{T}$ be given by $Q(t)=e^{2\pi it}.$ Throughout, we identify $T:\mathbb{T}\to\mathbb{T}$ with $S:[0,1)\to[0,1)$ where $S$ is given by $S(t)=(Q^{-1}TQ)(t).$ 
Figure \ref{fig:interval} shows an example of one such map.

\begin{figure}
    \centering
    \includegraphics[width=0.6\linewidth]{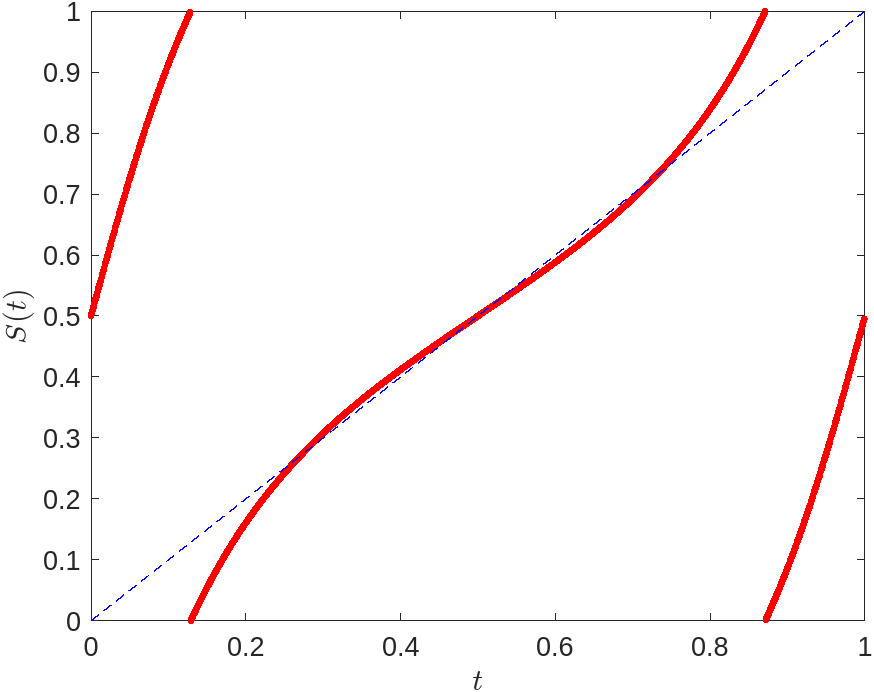}
    \caption{$t$ vs $S(t)=(Q^{-1}TQ)(t)$ where $T(z)=-\left(\frac{z-0.4}{1-0.4z}\right)^2, Q(t)=e^{2\pi i t}.$}
    \label{fig:interval}
\end{figure}

\begin{remark}
    When $n=1,$ $T$ is a Möbius transformation, and so some literature restricts the definition of a Blaschke product to $n\ge2,$ but throughout this work we consider $n\ge1.$
\end{remark}

\begin{theorem}[{\cite[Proposition 1.2]{expandPujals}}]\label{f.p.}
     For $n\ge2,$ let $T$ be a degree$-n$ Blaschke product. Then one of three mutually exclusive cases holds.
    \begin{enumerate}
        \item $T$ has $n+1$ fixed points on $\mathbb{T},$ where one is attracting and the remaining $n$ are expanding.
        \item $T$ has $n-1$ expanding fixed points on $\bbt.$ $T$ has one attracting fixed point $x\in D$ and another fixed point at $1/\bar{x}.$
        \item $T$ has $n+1$ fixed points on $\bbt$ where one fixed point $x$ is indifferent, that is, $|T'(x)|=1.$
    \end{enumerate}
\end{theorem}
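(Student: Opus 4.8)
The plan is to reduce the fixed point equation $T(z)=z$ on $\hat{\mathbb{C}}$ to a polynomial equation and then use the Schwarz–Pick / hyperbolic contraction structure of $T$ on $D$ together with the boundary derivative formula $|T'(z)|=\sum_{i=1}^n \frac{1-|a_i|^2}{|z-a_i|^2}$ for $z\in\mathbb{T}$. First I would write $T(z)-z=0$ as $\theta_0\prod_{i=1}^n(z-a_i) - z\prod_{i=1}^n(1-\bar a_i z)=0$, a polynomial of degree $n+1$ (the top-degree terms are $-(-1)^n(\prod \bar a_i) z^{n+1}$ unless all $a_i=0$; one handles the degenerate case $T(z)=\theta_0 z^n$ separately, where the fixed points are the $(n-1)$-th roots of $\bar\theta_0$ together with $0$ and $\infty$). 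Counting with multiplicity, $T$ has exactly $n+1$ fixed points on $\hat{\mathbb{C}}$. Because $T$ preserves $D$, $\mathbb{T}$ and $\hat{\mathbb{C}}\setminus\bar D$, and maps $z\mapsto 1/\bar z$ conjugates its action on the exterior to its action on $D$, fixed points come either on $\mathbb{T}$ or in symmetric pairs $\{x,1/\bar x\}$ with $x\in D$.

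Next I would invoke the Denjoy–Wolff theorem: since $T:D\to D$ is holomorphic and not an elliptic automorphism (it has degree $n\ge 2$), there is a unique Denjoy–Wolff point $x\in\bar D$ attracting every orbit in $D$; if $x\in D$ it is a genuine attracting fixed point with $|T'(x)|<1$, and then $1/\bar x$ is the unique other fixed point (exterior), accounting for case (2): the remaining $n-1$ fixed points lie on $\mathbb{T}$, and I would argue they are all expanding using the boundary derivative formula — at a boundary fixed point $z_0$, $T'(z_0)=\sum \frac{1-|a_i|^2}{|z_0-a_i|^2}\cdot(\text{unit phase})$ is real and positive (this requires a short computation showing $T'(z_0)$ is a positive real when $T(z_0)=z_0$), and a convexity/count argument forces each such value to exceed $1$. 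If instead $x\in\mathbb{T}$, all $n+1$ fixed points lie on $\mathbb{T}$; the Denjoy–Wolff point $x$ has $T'(x)\le 1$ (Julia–Wolff–Carathéodory), so either $T'(x)<1$ (case (1), one attracting boundary point, the other $n$ necessarily expanding by the same counting argument) or $T'(x)=1$ (case (3), indifferent). The mutual exclusivity is immediate from uniqueness of the Denjoy–Wolff point.

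The key quantitative input, and where I would spend the most care, is the counting argument that the non-Denjoy–Wolff boundary fixed points are all \emph{expanding} (derivative $>1$), never indifferent or attracting. The clean way is to use the identity $\sum_{\text{fixed } z_0\in\mathbb{T}} \frac{1}{T'(z_0)-1} = $ (a constant determined by the degree) — this is a residue/argument-principle computation: $\sum 1/(T'(z_0)-1)$ is the sum of residues of $1/(T(z)-z)$ over the boundary fixed points, and one evaluates the total by residues at the interior/exterior points or at $\infty$. Establishing this residue identity and reading off the sign constraints is the main obstacle; the Denjoy–Wolff and polynomial-degree parts are standard. Alternatively, following \cite[Proposition 1.2]{expandPujals}, one can use monotonicity of the lift $S=Q^{-1}TQ$: $S'>0$ and $S$ has degree $n$, so on the circle $S(t)-t$ has exactly $n+1$ zeros counted suitably, and between consecutive fixed points the graph of $S$ crosses the diagonal, forcing the slope to alternate in a way that makes all but (at most) one boundary fixed point repelling; I would present whichever of these two arguments is shorter.
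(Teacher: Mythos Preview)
The paper does not prove Theorem \ref{f.p.}: it is stated with a citation to \cite[Proposition 1.2]{expandPujals} and no proof is given. So there is no ``paper's own proof'' to compare against; your proposal is a standalone attempt at a result the authors take as background.

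On the substance of your outline: the architecture is sound (degree-$(n+1)$ polynomial count, the $z\mapsto 1/\bar z$ symmetry pairing interior and exterior fixed points, Denjoy--Wolff to isolate the unique non-repelling point), and this is indeed how the result is usually established. The one place where your sketch is genuinely incomplete is the claim that every boundary fixed point other than the Denjoy--Wolff point is expanding. You offer two routes---a residue identity you do not state precisely, and a lift/monotonicity argument you do not carry out---and flag this as ``the main obstacle.'' The cleanest way to close this is actually already implicit in the tools you cite: by the Julia--Wolff--Carath\'eodory theorem, a boundary fixed point $p$ with $T'(p)\le 1$ has the property that horodisks at $p$ are $T$-invariant, which forces $p$ to be the Denjoy--Wolff point. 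Since the Denjoy--Wolff point is unique, every \emph{other} boundary fixed point must have $T'>1$. This replaces both of your proposed arguments with a two-line appeal to JWC and avoids any delicate counting.

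One further wrinkle you should address explicitly: in case (3), the indifferent boundary fixed point may be a multiple root of $T(z)-z=0$ (the parabolic case), so the ``$n+1$ fixed points on $\mathbb{T}$'' must be read with multiplicity. Your polynomial count handles this automatically, but it is worth saying so.
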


\begin{proposition}[{\cite[Proposition 1.4]{topics_complex}}]\label{harmonic}
    For $x\in D$ and a continuous function $f:\mathbb{T}\to\mathbb{C},$ let $\tilde{f}:D\to\mathbb{C}$ be given by $$\tilde{f}(x)=\int_{\mathbb{T}}f(z)\frac{1-|x|^{2}}{|x-z|^{2}}\,dm(z).$$ We call $\tilde{f}$ the harmonic extension of $f$ to the disc and $P_{x}:=P_{x}(z)=\frac{1-|x|^{2}}{|z-x|^{2}}$ the Poisson kernel (associated to $x$).
    
    $\tilde{f}$ has the following properties:
    \begin{itemize}
    \item $\tilde{f}$ is harmonic on $D;$ and
        \item For $z\in\mathbb{T},$  $\lim_{r\nearrow1}\tilde{f}(rz)=f(z).$
    \end{itemize}
    
\end{proposition}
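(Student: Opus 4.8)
The plan is to reduce both assertions to three elementary properties of the Poisson kernel. First I would record, writing $x=re^{i\varphi}\in D$ and $z=e^{i\vartheta}\in\mathbb{T}$, the explicit form $P_x(z)=\dfrac{1-r^{2}}{1-2r\cos(\vartheta-\varphi)+r^{2}}$, from which: (a) $P_x(z)>0$ for all $x\in D$, $z\in\mathbb{T}$; (b) $\int_{\mathbb{T}}P_x(z)\,dm(z)=1$ for every $x\in D$; and (c) for fixed $\zeta\in\mathbb{T}$ and every $\delta>0$, $\sup\{P_x(z):z\in\mathbb{T},\,|z-\zeta|\ge\delta\}\to0$ as $x\to\zeta$ within $D$. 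Property (a) is immediate; for (c) note that $|z-x|\ge|z-\zeta|-|\zeta-x|\ge\delta/2$ once $|\zeta-x|<\delta/2$, so $P_x(z)\le 4(1-|x|^{2})/\delta^{2}\to0$. For (b) I would either expand $P_x(z)=\sum_{n\in\mathbb{Z}}r^{|n|}e^{in(\vartheta-\varphi)}$ (uniformly convergent in $\vartheta$ for fixed $r<1$) and integrate term by term, or compute $\int_{\mathbb{T}}\tfrac{z+x}{z-x}\,dm(z)=1$ by residues, writing $dm(z)=\tfrac{dz}{2\pi i z}$ on $\mathbb{T}$, and take real parts using the identity in the next paragraph.

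For harmonicity I would use that $P_x(z)=\operatorname{Re}\!\bigl(\tfrac{z+x}{z-x}\bigr)$ for $z\in\mathbb{T}$, $x\in D$; this is a one-line check from $\tfrac{z+x}{z-x}=\tfrac{(z+x)(\bar z-\bar x)}{|z-x|^{2}}$ together with $|z|=1$ and the fact that $x\bar z-z\bar x$ is purely imaginary. Splitting a complex continuous $f$ into real and imaginary parts, it suffices to treat real-valued $f$, where $\tilde f(x)=\operatorname{Re}\int_{\mathbb{T}}f(z)\tfrac{z+x}{z-x}\,dm(z)$. For each fixed $z\in\mathbb{T}$ the map $x\mapsto f(z)\tfrac{z+x}{z-x}$ is holomorphic on $D$, and on each disc $\{|x|\le\rho\}$ with $\rho<1$ it and its $x$-derivatives are bounded uniformly in $z$ (by $\|f\|_{\infty}$ times a constant depending only on $\rho$, via $|z-x|\ge 1-\rho$ and Cauchy estimates). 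Hence one may differentiate under the integral sign to conclude that $x\mapsto\int_{\mathbb{T}}f(z)\tfrac{z+x}{z-x}\,dm(z)$ is holomorphic on $D$, so its real part $\tilde f$ is harmonic there.

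For the boundary limit, fix $\zeta\in\mathbb{T}$ and $\varepsilon>0$, and use continuity of $f$ to choose $\delta>0$ with $|f(z)-f(\zeta)|<\varepsilon$ whenever $|z-\zeta|<\delta$. By (a) and (b),
\[
|\tilde f(x)-f(\zeta)|=\Bigl|\int_{\mathbb{T}}(f(z)-f(\zeta))P_x(z)\,dm(z)\Bigr|\le\varepsilon+2\|f\|_{\infty}\sup_{|z-\zeta|\ge\delta}P_x(z),
\]
and taking $x=r\zeta$, $r\nearrow1$, the last term tends to $0$ by (c); since $\varepsilon>0$ was arbitrary, $\lim_{r\nearrow1}\tilde f(r\zeta)=f(\zeta)$.

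I do not anticipate a substantive obstacle: this is the classical Poisson integral formula, and the only points needing care are the justification of differentiation under the integral sign — covered by the uniform bounds on compact subsets of $D$ — and the standard approximate-identity estimate in the last display, both of which are routine.
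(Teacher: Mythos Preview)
Your argument is correct: the identification $P_x(z)=\operatorname{Re}\!\bigl(\tfrac{z+x}{z-x}\bigr)$ together with differentiation under the integral sign gives harmonicity, and the approximate-identity estimate based on properties (a)--(c) of the Poisson kernel gives the boundary limit. These are exactly the classical steps.

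There is nothing to compare against in the paper itself: Proposition~\ref{harmonic} is stated with a citation to \cite[Proposition~1.4]{topics_complex} and no proof is given. Your write-up supplies what the paper omits, and does so along the standard route one would find in that reference.
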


The next lemma follows from \cite[Lemma 8.1.3]{complex_analysis_book}.
\begin{lemma}\label{harmonic_holomorphic}
    If $\tilde{f}:D\to D$ is harmonic and $T:D\to D$ is holomorphic, then $\tilde{f}\circ T$ is harmonic.
\end{lemma}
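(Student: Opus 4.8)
The plan is to prove the pointwise identity
$$\Delta(\tilde f\circ T)=\bigl(|T'|^2\bigr)\,\bigl((\Delta\tilde f)\circ T\bigr),$$
valid for any holomorphic $T:D\to D$ and any twice-differentiable $\tilde f$, and then invoke the hypothesis that $\tilde f$ is harmonic, i.e. $\Delta\tilde f\equiv0$, to conclude $\Delta(\tilde f\circ T)\equiv0$. Since harmonicity is a local condition and $T$ maps $D$ into $D$ (so that $\tilde f\circ T$ is defined on all of $D$), establishing this identity suffices. The cleanest route is through the Wirtinger derivatives $\partial_z=\tfrac12(\partial_x-i\partial_y)$ and $\partial_{\bar z}=\tfrac12(\partial_x+i\partial_y)$, for which the Laplacian factors as $\Delta=4\,\partial_z\partial_{\bar z}$.

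First I would record the two consequences of holomorphy of $T$: the Cauchy--Riemann equations are equivalent to $\partial_{\bar z}T=0$ and $\partial_z T=T'$, whence also $\partial_z\bar T=0$ and $\partial_{\bar z}\bar T=\overline{T'}$. Applying the Wirtinger chain rule to $\tilde f\circ T$ (treating $\tilde f$ as a function of $w$ and $\bar w$) gives
$$\partial_{\bar z}(\tilde f\circ T)=\bigl((\partial_w\tilde f)\circ T\bigr)\,\partial_{\bar z}T+\bigl((\partial_{\bar w}\tilde f)\circ T\bigr)\,\partial_{\bar z}\bar T=\bigl((\partial_{\bar w}\tilde f)\circ T\bigr)\,\overline{T'}.$$
Differentiating once more with $\partial_z$, and using that $\overline{T'}$ is antiholomorphic (so $\partial_z\overline{T'}=0$) together with $\partial_z T=T'$ and $\partial_z\bar T=0$, I would obtain
$$\partial_z\partial_{\bar z}(\tilde f\circ T)=\overline{T'}\,\bigl((\partial_w\partial_{\bar w}\tilde f)\circ T\bigr)\,T'=|T'|^2\,\bigl((\partial_w\partial_{\bar w}\tilde f)\circ T\bigr).$$
Multiplying by $4$ and recalling $\Delta=4\,\partial_w\partial_{\bar w}$ in the $w$-variables yields the desired identity.

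With the identity in hand, the conclusion is immediate: since $\tilde f$ is harmonic, $(\Delta\tilde f)\circ T\equiv0$, so $\Delta(\tilde f\circ T)\equiv0$ on $D$, i.e. $\tilde f\circ T$ is harmonic. The computation is valid verbatim for complex-valued $\tilde f$ because $\Delta=4\,\partial_z\partial_{\bar z}$ acts componentwise; alternatively one may apply the argument separately to $\operatorname{Re}\tilde f$ and $\operatorname{Im}\tilde f$. An equivalent and more conceptual proof, matching the cited \cite[Lemma 8.1.3]{complex_analysis_book}, writes $\tilde f$ locally as the real part of a holomorphic function $g$; then $g\circ T$ is holomorphic as a composition of holomorphic maps, so $\tilde f\circ T=\operatorname{Re}(g\circ T)$ is harmonic, the local primitive $g$ existing on any disc contained in $D$ since such discs are simply connected.

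The only point requiring care is the bookkeeping in the Wirtinger chain rule — in particular, remembering that $\tilde f$ depends on both $w$ and $\bar w$, so the chain rule produces two terms, and that it is precisely the holomorphy of $T$ (killing the $\partial_{\bar z}T$ and $\partial_z\bar T$ terms and rendering $\overline{T'}$ antiholomorphic) that collapses the second-order expression into the clean factor $|T'|^2$. There is no genuine obstacle: the statement is local, $T$ maps $D$ into $D$ so no domain issues arise, and the proof is a short direct calculation.
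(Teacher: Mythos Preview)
Your proof is correct. The paper itself gives no argument for this lemma beyond a citation to \cite[Lemma 8.1.3]{complex_analysis_book}, and your Wirtinger-derivative computation establishing $\Delta(\tilde f\circ T)=|T'|^2\,(\Delta\tilde f)\circ T$ (together with the alternative via a local holomorphic primitive $g$ with $\tilde f=\operatorname{Re} g$) is precisely the standard proof one finds in such references.
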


The next theorem follows from \cite[Proposition 4.1]{expandPujals}, and we include a proof here for completeness.
\begin{theorem}\label{poisson_aut} If $T$ is a Blaschke product, $f\in C(\mathbb{T})$ and $x\in D,$ then $$\int_{\mathbb{T}}f\circ T\cdot P_{x}\,dm=\int_{\mathbb{T}}f\cdot P_{T(x)}\,dm.$$
\end{theorem}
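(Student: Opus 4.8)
\emph{Proof plan.} The plan is to recognise both sides as values of harmonic extensions and then invoke uniqueness for the Dirichlet problem on the disc. Write $\tilde g$ for the harmonic extension of $g\in C(\mathbb{T})$ as in Proposition \ref{harmonic}. Since a finite Blaschke product $T$ maps $D$ into $D$, maps $\mathbb{T}$ onto $\mathbb{T}$, and (being a rational map whose poles $1/\bar a_i$ lie outside $\bar D$) is continuous on $\bar D$, we have $T(x)\in D$ and $f\circ T\in C(\mathbb{T})$. Thus the right-hand side of the claimed identity is exactly $\tilde f(T(x))$ and the left-hand side is $\widetilde{f\circ T}(x)$, so it suffices to show that the two functions $\tilde f\circ T$ and $\widetilde{f\circ T}$ agree on all of $D$, and then evaluate at $x$.

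First I would check that both are harmonic on $D$. The function $\widetilde{f\circ T}$ is harmonic by Proposition \ref{harmonic}. For $\tilde f\circ T$: the function $\tilde f$ is harmonic on $D$ by Proposition \ref{harmonic} and $T\colon D\to D$ is holomorphic, so Lemma \ref{harmonic_holomorphic} gives that $\tilde f\circ T$ is harmonic on $D$ (when $f$ is complex-valued one applies this to $\operatorname{Re}f$ and $\operatorname{Im}f$ separately; the restriction on the range in the statement of Lemma \ref{harmonic_holomorphic} is immaterial to the conclusion).

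Next I would compare boundary values. The Poisson integral of a continuous function on $\mathbb{T}$ solves the Dirichlet problem, i.e. $\tilde f$ extends continuously to $\bar D$ with $\tilde f|_{\mathbb{T}}=f$; this is the standard strengthening of the radial-limit statement recorded in Proposition \ref{harmonic}. Consequently $\tilde f\circ T$ extends continuously to $\bar D$, and since $T$ maps $\mathbb{T}$ to $\mathbb{T}$ its boundary values are $z\mapsto f(T(z))$. Likewise $\widetilde{f\circ T}$ extends continuously to $\bar D$ with boundary values $f\circ T$. Hence $\tilde f\circ T$ and $\widetilde{f\circ T}$ are both continuous on $\bar D$, harmonic on $D$, and agree on $\mathbb{T}$; by the maximum principle (applied to the real and imaginary parts of their difference) they coincide on $D$. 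In particular $\widetilde{f\circ T}(x)=\tilde f(T(x))$, which is the assertion.

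The only genuinely delicate point is the passage from radial limits, as stated in Proposition \ref{harmonic}, to continuity up to the boundary: the curve $r\mapsto T(rz)$ reaches $T(z)\in\mathbb{T}$ non-radially, so one cannot read off $\lim_{r\nearrow1}\tilde f(T(rz))$ directly from the radial-limit property. This is handled by the classical fact that the Poisson integral of a continuous boundary datum is continuous on $\bar D$ (equivalently, by Fatou's theorem together with the fact that $T$, being conformal at each boundary point since $|T'|>0$ on $\mathbb{T}$, sends radial approach to non-tangential approach). Alternatively, one can avoid the issue entirely: verify the identity first for $f(z)=z^k$, $k\in\mathbb{Z}$, where both sides equal $T(x)^k$ for $k\ge 0$ and $\overline{T(x)}^{\,|k|}$ for $k<0$ because the harmonic extension of the boundary values of the holomorphic (resp.\ anti-holomorphic) function $T^{k}$ is $T^{k}$ itself; then extend to all trigonometric polynomials by linearity and to all $f\in C(\mathbb{T})$ by uniform approximation, using that both sides are bounded in modulus by $\|f\|_{\infty}$ since $P_x$ is a probability density on $\mathbb{T}$.
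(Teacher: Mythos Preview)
Your proof is correct and follows the same approach as the paper's: identify the left side as $\widetilde{f\circ T}(x)$ and the right side as $\tilde f(T(x))$, then use Lemma~\ref{harmonic_holomorphic} together with uniqueness of the harmonic extension to conclude these agree. The paper's argument is simply a terser version---it writes the chain $\int f\circ T\cdot P_x\,dm=\widetilde{f\circ T}(x)=\tilde f\circ T(x)=\int f\cdot P_{T(x)}\,dm$ and cites Lemma~\ref{harmonic_holomorphic} for the middle equality without spelling out the boundary-value and maximum-principle step (or the radial-versus-nontangential subtlety) that you carefully supply.
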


\begin{proof}
    Let $f\in C(\mathbb{T}).$ Fix $x\in D.$ Then we have \begin{align*}
        \tint f\circ T\cdot P_{x}\,dm&=\widetilde{f\circ T}(x)\\&=\tilde{f}\circ T(x)\\&=\tint f\cdot P_{T(x)}\,dm,
    \end{align*} where the first and third lines follow from Proposition \ref{harmonic} and the second line from Lemma \ref{harmonic_holomorphic}.
\end{proof}
For a function $f:\mathbb{T}\to\mathbb{C},$ we let $||f||_{\infty}:=\text{ess}\sup_{z\in\mathbb{T}}|f(z)|.$

\begin{lemma}\label{l_infty}
    Let $0<r<1$ and take $z_{0}\in\mathbb{T}.$ Then,
    $$\lim_{r\to1}||P_{rz_{0}}||_{\infty}=\infty.$$
\end{lemma}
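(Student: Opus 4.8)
The plan is to compute $\|P_{rz_0}\|_\infty$ exactly and then let $r \to 1$. Since $rz_0 \in D$ lies strictly inside the disc while $z$ ranges over $\mathbb{T}$, the denominator $|z - rz_0|^2$ never vanishes, so $z \mapsto P_{rz_0}(z) = \frac{1-r^2}{|z-rz_0|^2}$ is continuous on the compact set $\mathbb{T}$; hence the essential supremum coincides with the supremum, which is attained. Thus it suffices to locate where $|z - rz_0|$ is minimized.

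First I would invoke the triangle inequality: for every $z \in \mathbb{T}$,
\[
|z - rz_0| \ge |z| - |rz_0| = 1 - r,
\]
with equality when $z = z_0$, since $|z_0 - rz_0| = |z_0|\,(1-r) = 1-r$. Therefore
\[
\|P_{rz_0}\|_\infty = \sup_{z \in \mathbb{T}} \frac{1-r^2}{|z-rz_0|^2} = \frac{1-r^2}{(1-r)^2} = \frac{1+r}{1-r}.
\]

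Finally I would let $r \nearrow 1$: the right-hand side $\frac{1+r}{1-r}$ tends to $+\infty$, which gives $\lim_{r \to 1}\|P_{rz_0}\|_\infty = \infty$, as claimed.

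There is no substantive obstacle here; the only point that requires a moment's care is identifying the minimizer of $|z - rz_0|$ on $\mathbb{T}$ (equivalently, the maximizer of the Poisson kernel), which is exactly the radial limit point $z_0$, and confirming that $|z_0|=1$ makes the lower bound $1-r$ sharp. Everything else is a direct computation.
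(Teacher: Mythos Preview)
Your proof is correct and is essentially the same as the paper's: both identify the essential supremum with the supremum by continuity, minimize $|z-rz_0|$ over $\mathbb{T}$ at $z=z_0$, obtain $\|P_{rz_0}\|_\infty=\frac{1+r}{1-r}$, and let $r\to 1$. The only cosmetic difference is that you justify the minimizer via the reverse triangle inequality, whereas the paper simply asserts that $z_0$ is the nearest point on $\mathbb{T}$ to $rz_0$.
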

\begin{proof}
    Since $P_{rz_0}(z)=\frac{1-|rz_{0}|^2}{|z-rz_{0}|^2}=\frac{1-r^2}{|z-rz_{0}|^2}$ is continuous on $\mathbb{T},$ it follows that \begin{align*}||P_{rz_{0}}||_{\infty}&=\frac{1-r^2}{\inf_{z\in\mathbb{T}}|z-rz_{0}|^2}\\&=\frac{1-r^2}{|z_{0}-rz_{0}|^2}\\&=\frac{1+r}{1-r}.\end{align*} In the second line we have used the fact that $z_{0}$ is the closest point on $\mathbb{T}$ to $rz_{0}.$ Thus the claim follows. 
\end{proof}

\begin{lemma}\label{poisson_diff}
    Take $x,y\in D.$ Then, $$\left|\left|P_{x}-P_{y}\right|\right|_{\infty}\le\frac{12|x-y|}{(1-|x|)^2(1-|y|)^2}.$$
\end{lemma}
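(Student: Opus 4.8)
The plan is to estimate the difference $P_x(z)-P_y(z)$ pointwise in $z\in\mathbb{T}$ and then pass to the supremum. The cleanest route is to exploit the standard fact that, for $z\in\mathbb{T}$, the Poisson kernel is the real part of a simple rational function: $P_x(z)=\operatorname{Re}\bigl(\tfrac{z+x}{z-x}\bigr)$. This is immediate since $\tfrac{z+x}{z-x}=\tfrac{(z+x)(\bar z-\bar x)}{|z-x|^2}$ has numerator $|z|^2-|x|^2+(x\bar z-z\bar x)$, whose real part is $1-|x|^2$ when $|z|=1$. (If one prefers to avoid this identity, the elementary argument sketched in the last paragraph gives the same conclusion with a slightly larger, still admissible, constant.)

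Next I would perform the algebraic simplification
$$\frac{z+x}{z-x}-\frac{z+y}{z-y}=\frac{(z+x)(z-y)-(z+y)(z-x)}{(z-x)(z-y)}=\frac{2z(x-y)}{(z-x)(z-y)},$$
so that, using $|\operatorname{Re} w|\le|w|$ and $|z|=1$,
$$|P_x(z)-P_y(z)|\le\frac{2|x-y|}{|z-x|\,|z-y|}.$$
The final step is the trivial lower bounds $|z-x|\ge 1-|x|$ and $|z-y|\ge 1-|y|$, valid for $z\in\mathbb{T}$ and $x,y\in D$, which give $|P_x(z)-P_y(z)|\le\frac{2|x-y|}{(1-|x|)(1-|y|)}$. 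Since $1-|x|\le 1$ and $1-|y|\le 1$, this is at most $\frac{2|x-y|}{(1-|x|)^2(1-|y|)^2}\le\frac{12|x-y|}{(1-|x|)^2(1-|y|)^2}$, and taking the essential supremum over $z\in\mathbb{T}$ yields the claim.

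If the $\operatorname{Re}$-representation is to be avoided, the same bound follows by the add-and-subtract splitting
$$P_x(z)-P_y(z)=(1-|x|^2)\Bigl(\frac{1}{|z-x|^2}-\frac{1}{|z-y|^2}\Bigr)+\frac{|y|^2-|x|^2}{|z-y|^2},$$
combined with the estimates $\bigl||z-y|^2-|z-x|^2\bigr|\le 4|x-y|$ (from $|z-y|^2-|z-x|^2=2\operatorname{Re}(\bar z(x-y))+|y|^2-|x|^2$ together with $\bigl||y|^2-|x|^2\bigr|\le 2|x-y|$), the elementary inequality $1-|x|^2\le 2(1-|x|)$, and again $|z-x|\ge 1-|x|$, $|z-y|\ge 1-|y|$; this produces a constant of $10$, hence $12$.

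The computation is routine; the only point requiring any thought is recognizing the cancellation that extracts the factor $|x-y|$ in the numerator — either via the $\tfrac{z+x}{z-x}$ trick or via the add-and-subtract splitting — rather than naively combining $P_x-P_y$ over a common denominator, which hides it. Tracking constants to land at (or below) $12$ is the only other bit of bookkeeping, and both approaches leave comfortable room.
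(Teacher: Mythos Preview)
Your proof is correct, and your primary route is genuinely different from (and sharper than) the paper's. The paper works directly with the real expression $\frac{1-|x|^2}{|z-x|^2}-\frac{1-|y|^2}{|z-y|^2}$, puts it over the common denominator $|z-x|^2|z-y|^2$, and then bounds the numerator via triangle inequalities (using in particular the pointwise fact $\bigl||a|^2-|b|^2\bigr|\le|a^2-b^2|$ to factor out $|x-y|$); the bounds $1-|x|^2\le 1$, $|y+x-2z|\le 4$, $|z-x|^2\le 4$, $|y+x|\le 2$ then land exactly on the constant $12$. This is essentially the add-and-subtract alternative you sketch at the end, with slightly different bookkeeping. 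Your main argument via $P_x(z)=\operatorname{Re}\frac{z+x}{z-x}$ makes the cancellation producing $|x-y|$ completely transparent and yields the stronger estimate $\|P_x-P_y\|_\infty\le\frac{2|x-y|}{(1-|x|)(1-|y|)}$, from which the stated inequality follows with room to spare. The advantage of the paper's approach is only that it avoids invoking the complex-analytic identity for the Poisson kernel; the advantage of yours is a cleaner computation and a better constant.
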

\begin{proof}
For $z\in\mathbb{T}$ we have
    \begin{align*} 
        \left|P_{x}(z)-P_{y}(z)\right|&=\left|\frac{1-|x|^2}{|z-x|^2}-\frac{1-|y|^2}{|z-y|^2}\right|\\&\le\frac{(1-|x|^2)||z-y|^2-|z-x|^2|+|z-x|^2||y|^2-|x|^2|}{|z-x|^2|z-y|^2}\\&\le\frac{(1-|x|^2)|(z-y)^2-(z-x)^2|+|z-x|^2|y^2-x^2|}{|z-x|^2|z-y|^2}\\&=\frac{(1-|x|^2)|y-x|(|y+x-2z|+|z-x|^2|y+x|)}{|z-x|^2|z-y|^2}\\&\le\frac{12|x-y|}{(1-|x|)^2(1-|y|)^2},  
        \end{align*} where the second, third and last lines follow by applying the triangle inequality.
\end{proof}

\begin{definition}\label{measure_x}
    Consider a point $x\in\overline{D}.$ We define the probability measure $\mu_{x}$ as follows.
    \begin{itemize}
        \item If $x\in D,$ then let $\mu_{x}$ be the measure satisfying $\frac{d\mu_{x}}{dm}=P_{x}$ where $m$ denotes the Lebesgue measure.
        \item If $x\in\mathbb{T},$ let $\mu_{x}=\delta_{x},$ the Dirac measure.
    \end{itemize}
\end{definition}

\begin{remark}[\cite{expandPujals}]
     Let $\phi_{x}$ be the Möbius transformation  mapping 0 to $x,$ $\phi_{x}(z)=\frac{z+x}{1+\bar{x}z}$. For $x\in D,$ $\mu_{x}$ in Definition \ref{measure_x} is equivalently given by $\mu_{x}=(\phi_{x})_*(m),$ the pushforward of $m$ with respect to $\phi_{x}$.
    
\end{remark}

If $x\in\overline{D}$ is a fixed point of a Blaschke product $T,$ then it follows from Definition \ref{measure_x} and Theorem \ref{poisson_aut} that $\mu_{x}$ is $T$-invariant, that is, $(T)_{*}(\mu_{x})=\mu_{x}.$

The next theorem follows as a consequence of \cite[Theorem 1.2]{expandPujals}.
\begin{theorem} \label{det_entropy}
    Let $T$ be a Blaschke product of degree $n\ge2$ with fixed point $x\in D$. Let $\mu_{x}$ be as in Definition \ref{measure_x}. The measure-theoretic entropy of $T$ with respect to $\mu_{x}$ is given by $$h_{\mu_{x}}(T)=\int_{\mathbb{T}}\log|T'(z)|\,d\mu_{x}.$$ 
\end{theorem}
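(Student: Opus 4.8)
The plan is to reduce the statement to the case of a Blaschke product fixing the origin — where Lebesgue measure is the natural invariant measure and the entropy formula is the content of \cite[Theorem 1.2]{expandPujals} — and then transfer back by conjugation. Note that a degree-$n$ Blaschke product $T$ with a fixed point $x\in D$ is precisely case (2) of Theorem \ref{f.p.}, and $\mu_x$ is its natural absolutely continuous invariant measure, so if the cited result is stated case-by-case this is an immediate specialization; I spell out the conjugation argument for completeness.

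First I would set $\phi_x(z)=\frac{z+x}{1+\bar x z}$, the Möbius transformation sending $0$ to $x$, and put $\tilde T:=\phi_x^{-1}\circ T\circ\phi_x$. A composition of finite Blaschke products is again a finite Blaschke product and degree is multiplicative, so $\tilde T$ is a degree-$n$ Blaschke product, and $\tilde T(0)=\phi_x^{-1}(T(x))=\phi_x^{-1}(x)=0$. By the remark following Definition \ref{measure_x}, $\mu_x=(\phi_x)_*m$, so $\phi_x$ is a measure-theoretic isomorphism from $(\mathbb{T},\tilde T,m)$ to $(\mathbb{T},T,\mu_x)$ (here $m$ is $\tilde T$-invariant by Theorem \ref{poisson_aut} applied with the point $0$, since $\tilde T(0)=0$ and $P_0\equiv 1$). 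As metric entropy is a conjugacy invariant, $h_{\mu_x}(T)=h_m(\tilde T)$.

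Next I would apply \cite[Theorem 1.2]{expandPujals} to $\tilde T$ to get $h_m(\tilde T)=\int_{\mathbb{T}}\log|\tilde T'|\,dm$, and then identify this integral with $\int_{\mathbb{T}}\log|T'|\,d\mu_x$. The chain rule on $\mathbb{T}$ gives
\[
\log|\tilde T'(z)|=\log\bigl|(\phi_x^{-1})'\bigl(T(\phi_x(z))\bigr)\bigr|+\log\bigl|T'(\phi_x(z))\bigr|+\log|\phi_x'(z)|.
\]
Integrating against $dm(z)$ and using $(\phi_x)_*m=\mu_x$, the middle term contributes exactly $\int_{\mathbb{T}}\log|T'|\,d\mu_x$. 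Writing $\gamma:=\log|(\phi_x^{-1})'|$ and using $(\phi_x^{-1})'(w)=1/\phi_x'(\phi_x^{-1}(w))$, the same change of variables turns the sum of the first and third terms into $\int_{\mathbb{T}}(\gamma\circ T-\gamma)\,d\mu_x$, which vanishes because $\mu_x$ is $T$-invariant (as recorded in the excerpt, $x$ being a fixed point of $T$). All integrals are finite since $|T'|$ and $|\phi_x'|$ are continuous and strictly positive on the compact set $\mathbb{T}$, so their logarithms are bounded. This gives $h_{\mu_x}(T)=\int_{\mathbb{T}}\log|T'|\,d\mu_x$.

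The only delicate point is the change-of-variables/chain-rule bookkeeping in the last step: one must check that the Jacobian corrections produced by conjugating $T$ to $\tilde T$ cancel upon integration, which is exactly where $T$-invariance of $\mu_x$ (equivalently, $\tilde T$-invariance of $m$) enters. Everything else is a direct appeal to \cite[Theorem 1.2]{expandPujals} and the conjugacy-invariance of metric entropy.
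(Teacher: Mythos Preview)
Your proposal is correct. The paper itself supplies no argument here: it simply records that the theorem ``follows as a consequence of \cite[Theorem 1.2]{expandPujals}'' and moves on. Your conjugation by $\phi_x$ to reduce to the origin-fixing case, together with the telescoping of the Jacobian corrections via $T$-invariance of $\mu_x$, is a correct and self-contained way to extract the statement from that reference; it is essentially the same appeal to \cite{expandPujals}, just with the reduction spelled out.
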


\subsection{Map cocycles and their measure theoretic entropy}
In this section, we give relevant definitions and results relating to map cocycles and their measure theoretic entropy that will be used throughout this work.

\begin{definition}[Map cocycle]
    Let $(\Omega,\mathcal{F},\mathbb{P})$ be a probability space. Let $\mathcal{T}:=(T_{\omega})_{\omega\in\Omega}$ be a family of measurable transformations of $(X,\mathcal{B}).$ A map cocycle is a tuple $\mathcal{R}=(\mathcal{T},\sigma)$ where $\sigma:\Omega\to\Omega$ is an invertible, ergodic, measure-preserving transformation of $(\Omega,\mathcal{F},\mathbb{P}).$ We call $(\Omega, \mathcal{F},\mathbb{P}, \sigma)$ the base dynamical system. We will simply write $\mathcal{T}$ when $\sigma$ is clear from the context. 
\end{definition}

Throughout we will refer to a \textit{$\mathcal{T}$-invariant measure} (or \textit{random invariant measure}), given by the follow definition.
\begin{definition}[$\mathcal{T}$-invariant measure]\label{}
    Let $(\mathcal{T},\sigma)$ be a map cocycle. Let $\mu$ be a measure on $\Omega\times\mathbb{T}$ which has disintegration $\{\mu_{\omega}\}_{\omega\in\Omega}$ with respect to $\mathbb{P}.$ We call $\mu$ a $\mathcal{T}$-invariant measure if $\mu_{\omega}$ is a probability measure, $\omega\mapsto\mu_{\omega}$ is measurable and for $\mathbb{P}$-a.e. $\omega\in\Omega$ $$(T_{\omega})_{*}(\mu_{\omega})=\mu_{\sigma\omega}.$$
\end{definition}

The following result due to Bogenschütz, adapted to our setting, tells us that the metric entropy of a random dynamical system is given as the sum of its so-called fibre entropy and the entropy of its base dynamical system.
\begin{lemma}[{\cite[Corollary 1]{bogenschutz_AR}}]\label{skew}

    Let $(\Omega,\mathcal{F},\mathbb{P},\sigma)$ be a dynamical system where $\sigma:\Omega\to\Omega$ is an ergodic invertible $\mathbb{P}$-preserving transformation. Let $(\mathcal{T}, \sigma)$ be a map cocycle. Furthermore, let $\mu$ be a $\mathcal{T}$-invariant measure. For a partition $\mathcal{P}$ of $X,$ let $$h^{fib}_{\mu}(\mathcal{T};\mathcal{P})=\lim_{n\to\infty}\frac{1}{n}\int_{\Omega} H_{\mu_{\omega}}\left(\bigvee_{i=0}^{n-1}T^{-i}_{\omega}\mathcal{P}\right)\,d\mathbb{P}(\omega),$$ where $H_{\mu_{\omega}}(\mathcal{P})=-\sum_{A\in\mathcal{P}}\mu_{\omega}(A)\log\mu_{\omega}(A),$ and let $h_{\mu}^{fib}(\mathcal{T})=\sup\{h^{fib}_{\mu}(\mathcal{T};\mathcal{P})\}$ where the supremum is taken over all finite partitions $\mathcal{P}$ of $X.$ If $\mathcal{F}$ is countably generated $(\text{mod }0),$ then $$h_{\mu}(\mathcal{T})=h_{\mu}^{fib}(T)+h_{\mathbb{P}}(\sigma).$$
\end{lemma}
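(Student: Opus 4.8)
The plan is to realize the cocycle as a skew product and reduce the statement to the Abramov--Rokhlin relative entropy formula. Define $\tau:\Omega\times X\to\Omega\times X$ by $\tau(\omega,x)=(\sigma\omega,T_{\omega}x)$. By the definition of a $\mathcal{T}$-invariant measure, $\mu$ (with disintegration $\{\mu_{\omega}\}$) is $\tau$-invariant, and by definition $h_{\mu}(\mathcal{T})=h_{\mu}(\tau)$. The coordinate projection $\pi:\Omega\times X\to\Omega$ satisfies $\pi\circ\tau=\sigma\circ\pi$ and $\pi_{*}\mu=\mathbb{P}$, so $(\Omega,\mathcal{F},\mathbb{P},\sigma)$ is a measure-theoretic factor of $(\Omega\times X,\mu,\tau)$ via the $\tau$-invariant sub-$\sigma$-algebra $\mathcal{A}:=\pi^{-1}\mathcal{F}$, using that $\sigma^{-1}\mathcal{F}=\mathcal{F}$ since $\sigma$ is invertible. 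The Abramov--Rokhlin formula, valid because $\mathcal{F}$, and hence $\mathcal{A}$, is countably generated $(\text{mod }0)$, then gives $h_{\mu}(\tau)=h_{\mathbb{P}}(\sigma)+h_{\mu}(\tau\mid\mathcal{A})$, where $h_{\mu}(\tau\mid\mathcal{A})$ is the relative entropy of $\tau$ over the base.

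It remains to identify $h_{\mu}(\tau\mid\mathcal{A})$ with the fibre entropy $h^{fib}_{\mu}(\mathcal{T})$. Given a finite partition $\mathcal{P}$ of $X$, lift it to $\widehat{\mathcal{P}}:=\{\Omega\times P:P\in\mathcal{P}\}$. Since the $\omega$-fibre of $\tau^{-i}(\Omega\times P)$ is the preimage of $P$ under the $i$-fold cocycle iterate starting at $\omega$, the atom of $\bigvee_{i=0}^{n-1}\tau^{-i}\widehat{\mathcal{P}}$ through $(\omega,x)$ equals $\{\omega\}$ times the atom of $\bigvee_{i=0}^{n-1}T_{\omega}^{-i}\mathcal{P}$ through $x$. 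Feeding this into the Rokhlin disintegration $\mu=\int_{\Omega}\mu_{\omega}\,d\mathbb{P}(\omega)$ gives
\[
H_{\mu}\!\left(\bigvee_{i=0}^{n-1}\tau^{-i}\widehat{\mathcal{P}}\ \middle|\ \mathcal{A}\right)=\int_{\Omega}H_{\mu_{\omega}}\!\left(\bigvee_{i=0}^{n-1}T_{\omega}^{-i}\mathcal{P}\right)d\mathbb{P}(\omega).
\]
Dividing by $n$ and letting $n\to\infty$ produces $h^{fib}_{\mu}(\mathcal{T};\mathcal{P})$, the limit existing by subadditivity of $n\mapsto\int_{\Omega}H_{\mu_{\omega}}\!\left(\bigvee_{i=0}^{n-1}T_{\omega}^{-i}\mathcal{P}\right)d\mathbb{P}$, which in turn follows from the cocycle composition identity, the $\mathcal{T}$-invariance of $\mu$, and the $\sigma$-invariance of $\mathbb{P}$. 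Finally one checks that lifted partitions $\widehat{\mathcal{P}}$ are rich enough to compute $h_{\mu}(\tau\mid\mathcal{A})$: any finite partition of $\Omega\times X$ can be approximated, in the conditional-entropy sense modulo $\mathcal{A}$, by one constant along fibres, using that $\mathcal{F}$ is countably generated. Taking the supremum over $\mathcal{P}$ then yields $h_{\mu}(\tau\mid\mathcal{A})=\sup_{\mathcal{P}}h^{fib}_{\mu}(\mathcal{T};\mathcal{P})=h^{fib}_{\mu}(\mathcal{T})$, and the theorem follows.

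I expect the main obstacle to be this last approximation step, together with the bookkeeping of measurability of $\omega\mapsto\mu_{\omega}$ and of the relevant conditional entropies --- this is precisely where the hypothesis that $\mathcal{F}$ be countably generated $(\text{mod }0)$ enters. The skew-product reduction and the Abramov--Rokhlin formula itself are otherwise standard, so the substance of the proof is the careful treatment of these measurability and approximation points, as carried out by Bogenschütz in \cite{bogenschutz_AR}.
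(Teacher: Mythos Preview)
The paper does not supply its own proof of this lemma; it is quoted directly from Bogensch\"utz \cite{bogenschutz_AR} and used as a black box. Your sketch---realizing the cocycle as the skew product $\tau(\omega,x)=(\sigma\omega,T_{\omega}x)$, applying the Abramov--Rokhlin formula to the factor map $\pi:\Omega\times X\to\Omega$, and identifying the relative entropy over $\mathcal{A}=\pi^{-1}\mathcal{F}$ with the fibre entropy via lifted partitions---is the standard route and is essentially how Bogensch\"utz proceeds, so there is nothing to compare against in the present paper.
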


\begin{definition}
    A measurable function $T:X\to X$ on a measure space $(X,\mathcal{B},\mu)$ is said to be nonsingular with respect to $\mu$ if $\mu(T^{-1}(B))=0$ for all $B\in\mathcal{B}$ such that $\mu(B)=0.$
\end{definition}

\begin{definition}[Lasota-Yorke map]\label{LY}
    $T:[0,1]\to[0,1]$ is called a Lasota-Yorke map if there exists a finite subdivision $0=b_{0}<b_{1}<\cdots<b_{n}=1$ such that for each restriction $T_{i}$ of $T$ to any subinterval $(b_{i-1}, b_{i}), i=1,...,n:$
    \begin{enumerate}
        \item $T_{i}$ is a homeomorphism on its image; and \label{homeo}
        \item $T_{i}$ is nonsingular with respect to the Lebesgue measure. \label{nonsingular}
    \end{enumerate}
\end{definition}

\begin{lemma}[Duality relation]\label{duality}
    Let $\mathcal{L}_{T}$ be as in Definition \ref{transfer_operator}. Let $f\in C(\mathbb{T}),$ $g\in L^{\infty}(\mathbb{T}).$ Then $\mathcal{L}_{T}$ satisfies the duality relation $$\int_{\mathbb{T}}f\cdot g\circ T\,dm=\int_{\mathbb{T}}g\cdot\mathcal{L}_{T}f\,dm.$$
\end{lemma}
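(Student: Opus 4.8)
The plan is to derive the duality relation directly from the defining property of the transfer operator in Definition \ref{transfer_operator}, first for indicator functions and simple functions, and then extend to general $g\in L^\infty(\mathbb{T})$ by a standard approximation argument. Specifically, the operator $\mathcal{L}_T$ is characterized by $\int_A \mathcal{L}_T f\,dm = \int_{T^{-1}(A)} f\,dm$ for all $f\in L^1(m)$ and all measurable $A$. Rewriting the right-hand side as $\int_{\mathbb{T}} f\cdot \mathbbm{1}_{T^{-1}(A)}\,dm = \int_{\mathbb{T}} f\cdot (\mathbbm{1}_A\circ T)\,dm$, and the left-hand side as $\int_{\mathbb{T}} \mathbbm{1}_A\cdot \mathcal{L}_T f\,dm$, we obtain exactly the claimed identity in the special case $g = \mathbbm{1}_A$.

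Next I would extend by linearity: for a simple function $g = \sum_{j=1}^k c_j \mathbbm{1}_{A_j}$, both sides of the identity are linear in $g$, so the relation holds for all simple $g$. Then, given an arbitrary $g\in L^\infty(\mathbb{T})$, choose a sequence of simple functions $g_n\to g$ with $\|g_n\|_\infty \le \|g\|_\infty$ (for instance via the standard dyadic truncation of $\mathrm{Re}\,g$ and $\mathrm{Im}\,g$), so that $g_n\to g$ pointwise and boundedly. Since $f\in C(\mathbb{T})\subseteq L^1(m)$, we have $f\cdot(g_n\circ T)\to f\cdot(g\circ T)$ pointwise with the uniform integrable dominating function $\|g\|_\infty|f|\in L^1(m)$, so dominated convergence gives $\int_{\mathbb{T}} f\cdot(g_n\circ T)\,dm \to \int_{\mathbb{T}} f\cdot(g\circ T)\,dm$. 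Likewise, since $\mathcal{L}_T f\in L^1(m)$, dominated convergence (with dominating function $\|g\|_\infty|\mathcal{L}_T f|$) yields $\int_{\mathbb{T}} g_n\cdot\mathcal{L}_T f\,dm \to \int_{\mathbb{T}} g\cdot\mathcal{L}_T f\,dm$. Passing to the limit in the identity for simple functions completes the proof.

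There is no real obstacle here; this is essentially the textbook characterization of the transfer operator, and the only mild point to be careful about is the measurability and integrability bookkeeping in the limiting step, which the boundedness of $g$ and the integrability of $f$ and $\mathcal{L}_T f$ handle automatically. The hypothesis $f\in C(\mathbb{T})$ is stronger than needed (any $f\in L^1(m)$ works), but is harmless for our purposes.
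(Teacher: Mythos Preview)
Your argument is correct and is the standard derivation of the duality relation from the defining property of $\mathcal{L}_T$ in Definition~\ref{transfer_operator}: establish it for $g=\mathbbm{1}_A$, extend by linearity to simple functions, and pass to the limit by dominated convergence using $\|g\|_\infty|f|$ and $\|g\|_\infty|\mathcal{L}_T f|$ as dominating functions. Your observation that $f\in L^1(m)$ suffices is also correct.

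There is nothing to compare against: the paper states Lemma~\ref{duality} without proof, treating it as a well-known fact. Your write-up is exactly the proof one would supply if asked to fill in the details.
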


\begin{definition}[Variation]
For a function $f:[0,1]\to\mathbb{R},$ the variation of $f$ is given by  $$\var\left(f\right):=\sup_{\mathcal{P}}\sum_{j=1}^{n}|f(b_{j})-f(b_{j-1})|$$ where $\mathcal{P}=(0=b_{0}<b_{1}<\cdots<b_{n}=1)$ is a finite partition of $[0,1].$
\end{definition}

\begin{theorem}[\cite{buzzi_edoc}]\label{buzzi}
    Let $\sigma$ be an invertible, ergodic, measure-preserving transformation of a probability space $(\Omega, \mathcal{F}, \mathbb{P}).$ Suppose a random Lasota-Yorke map $\mathcal{T}:=(T_{\omega})_{\omega\in\Omega}$ satisfies \begin{enumerate}
        \item $\omega\mapsto(\inf_{x\in[0,1]\backslash \{b_{0},...,b_{n_{\omega}}\}}|T_{\omega}'(x)|,\var(1/|T_{\omega}'|), n_{\omega}, b_{0},...,b_{n_{\omega}})$ is measurable on $\Omega;$ \label{BB_1}
        \item $\lim_{K\to\infty}\int_{\Omega}\log\min(\inf_{x\in[0,1]\backslash \{b_{0},...,b_{n_{\omega}}\}}|T'(x)|, K)\,d\pw>0;$ \label{BB_2}
        \item $\int_{\Omega}\log^{+}\frac{n_{\omega}}{\inf_{x\in[0,1]\backslash \{b_{0},...,b_{n_{\omega}}\}}|T'(x)|}\,d\pw<\infty;$ \label{BB_3}
        \item $\int_{\Omega}\log^{+}\var\left(\frac{1}{|T_{\omega}'|}\right)\,d\pw<\infty;$ and \label{BB_4}
        \item For every non-trivial $I\subset[0,1]$ and $\mathbb{P}$-a.e. $\omega\in\Omega$ there exists $n_{c}:=n_{c}(\omega)$ such that for all $n\ge n_{c}$ $\mathrm{ess}\inf_{x\in[0,1]}(\mathcal{L}_{\omega}^{(n)}1_{I}(x))>0,$ where $1_{I}$ is the characteristic function of $I$ and $\mathcal{L}_{\omega}^{(n)}:=\mathcal{L}_{\sigma^{n-1}\omega}\circ\cdots\circ\mathcal{L}_{\omega}.$ \label{BB_5}
    \end{enumerate} Then, \begin{enumerate}[label=(\roman*)]
        \item  There is a density $h$ on $\Omega\times [0,1],$ such that   $\int_{0}^{1}h_{\omega}dm=1$ and for $\mathbb{P}$-a.e. $\omega\in\Omega,$   $$\mathcal{L}_{\omega}h_{\omega}=h_{\sigma\omega}.$$ Moreover  $h$ is unique modulo $m$ and $\var(h_{\omega})<\infty$ for $\mathbb{P}$-a.e. $\omega\in\Omega;$ and \label{BB_existence}
        \item There exists $\rho<1$ such that for all $h_{*}:X\to[0,\infty)$ with bounded variation and $||h_{*}||_{1}=1$, for $\mathbb{P}$-a.e. $\omega\in\Omega,$ there is $n_{0}(\omega):=n_{0}$ such that for all $n\ge n_{0}$ $$||\mathcal{L}_{\sigma^{-n}\omega}^{(n)}h_{*}-h_{\omega}||_{\infty}\le\rho^n.$$ \label{BB_diff}
    \end{enumerate}
\end{theorem}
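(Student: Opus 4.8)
The plan is to follow Buzzi's strategy for random Lasota--Yorke maps: establish a \emph{random Lasota--Yorke inequality} for the transfer operators, iterate it along orbits using the expanding-on-average and integrability hypotheses to obtain a uniform-in-$n$ bound on variations, extract the random invariant density by compactness, and finally use the random covering hypothesis~(5) to upgrade equivariance to a quenched spectral gap, which simultaneously yields uniqueness and the exponential estimate in~(ii).

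Concretely, I would first record the one-step estimate
$$\var(\mathcal{L}_{\omega} f)\le \alpha_{\omega}\var(f)+\beta_{\omega}\|f\|_{1},\qquad \|\mathcal{L}_{\omega} f\|_{1}\le\|f\|_{1},$$
valid for $f$ of bounded variation, where $\alpha_{\omega}$ is controlled by $\big(\inf_{x}|T_{\omega}'(x)|\big)^{-1}$ together with the branch count $n_{\omega}$, and $\beta_{\omega}$ by $\var(1/|T_{\omega}'|)$ and $n_{\omega}$; this is the classical interval-map computation (using Lemma~\ref{duality} and Definition~\ref{LY}), and hypothesis~(1) makes $\omega\mapsto(\alpha_{\omega},\beta_{\omega})$ measurable. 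Iterating with the cocycle identity $\mathcal{L}^{(n)}_{\omega}=\mathcal{L}_{\sigma^{n-1}\omega}\circ\cdots\circ\mathcal{L}_{\omega}$ (and passing to a sufficiently high iterate to absorb the fixed boundary-term constant), hypotheses~(2)--(4) ensure, via Birkhoff's ergodic theorem applied to the logarithm of the multiplicative constant (whose integral is negative by~(2)) and a tempered-constant/Borel--Cantelli control of the subexponential factors $\log^{+}(n_{\omega}/\inf|T_{\omega}'|)$ and $\log^{+}\var(1/|T_{\omega}'|)$, that for $\mathbb{P}$-a.e.\ $\omega$ there are $C(\omega)<\infty$ and a nonrandom $\lambda<1$ with
$$\var\big(\mathcal{L}^{(n)}_{\sigma^{-n}\omega} f\big)\le C(\omega)\,\lambda^{n}\var(f)+C(\omega)\|f\|_{1}$$
for all $n$ and all BV $f$; in particular $\{\mathcal{L}^{(n)}_{\sigma^{-n}\omega} h_{*}\}_{n}$ is bounded in $\var$, hence in $\|\cdot\|_{\infty}$, whenever $\|h_{*}\|_{1}=1$.

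For existence in~(i), apply the above to $f\equiv 1$ and pass to the Cesàro averages $\frac1N\sum_{n=0}^{N-1}\mathcal{L}^{(n)}_{\sigma^{-n}\omega}1$; being bounded in $\var$, a subsequence converges in $L^{1}$ (Helly's selection theorem, i.e.\ compactness of $\mathrm{BV}$ in $L^{1}$) to some $h_{\omega}\ge 0$ with $\int_{0}^{1}h_{\omega}\,dm=1$ and $\var(h_{\omega})<\infty$; $L^{1}$-continuity of $\mathcal{L}_{\omega}$ together with a measurable-selection argument gives $\mathcal{L}_{\omega}h_{\omega}=h_{\sigma\omega}$ for $\mathbb{P}$-a.e.\ $\omega$ and measurability of $\omega\mapsto h_{\omega}$. (Alternatively, one may invoke the multiplicative ergodic theorem for the transfer-operator cocycle on $\mathrm{BV}$, the top Oseledets space being one-dimensional and spanned by $h_{\omega}$.) Then hypothesis~(5) --- which by $\sigma$-invariance equally controls the pullback compositions $\mathcal{L}^{(n)}_{\sigma^{-n}\omega}$ --- says that after an a.e.-finite time any $\mathcal{L}^{(n)}_{\sigma^{-n}\omega}1_{I}$ is bounded below; combined with the upper bound derived above, this traps $\mathcal{L}^{(n)}_{\sigma^{-n}\omega}h_{*}$ inside a random cone of densities comparable to constants, on which the transfer operator contracts the Hilbert projective metric at a rate governed by the cone's aspect ratio. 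A Kac-type return-time argument converts the a.e.-finite mixing times into a uniform contraction, producing a nonrandom $\rho<1$ and $n_{0}(\omega)$ with $\|\mathcal{L}^{(n)}_{\sigma^{-n}\omega}h_{*}-h_{\omega}\|_{\infty}\le\rho^{n}$ for $n\ge n_{0}(\omega)$, which is~(ii); applying this to any second invariant density forces it to coincide with $h_{\omega}$, giving uniqueness in~(i).

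The main obstacle is the bookkeeping in the iteration step: converting the ``expanding on average'' and $\log^{+}$-integrability hypotheses into a \emph{nonrandom}, genuinely exponential rate $\lambda^{n}$ with an additive error that stays $O(1)$, despite unbounded branch numbers $n_{\omega}$ and variations $\var(1/|T_{\omega}'|)$ --- this needs careful control of tempered random constants (Birkhoff together with Borel--Cantelli), and one must be sure the fixed boundary-term factor does not accumulate geometrically. A secondary difficulty is that the covering time in~(5) is only almost surely finite, so obtaining a \emph{uniform} $\rho$ in~(ii) requires a measurable return-time/Kac argument rather than a direct cone-contraction estimate; the invertibility and ergodicity of $\sigma$ enter precisely here and in the very definition of the pullbacks $\mathcal{L}^{(n)}_{\sigma^{-n}\omega}$.
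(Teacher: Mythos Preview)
The paper does not prove this theorem at all: it is quoted verbatim from Buzzi \cite{buzzi_edoc} and used as a black box in the proof of Theorem~\ref{buzzi_blaschke}. There is therefore no ``paper's own proof'' to compare your proposal against.

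That said, your sketch is a faithful outline of Buzzi's actual argument in \cite{buzzi_SRB,buzzi_edoc}: the one-step Lasota--Yorke inequality, iteration along orbits with Birkhoff/tempered-constant control coming from hypotheses (2)--(4), Helly-type compactness to produce an equivariant BV density, and then the random covering condition (5) to obtain uniqueness and the quenched exponential rate. The two caveats you flag --- controlling the accumulation of boundary terms so that the additive part stays bounded, and converting a.e.-finite covering times into a nonrandom $\rho$ --- are exactly the places where Buzzi's proof does real work, so your identification of them is accurate. If you were asked to supply a self-contained proof here rather than a citation, your outline would be the right skeleton; but for the purposes of this paper the statement is simply imported.
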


The following theorem is a corollary of \cite[Theorem 6.1]{bogenschutz_1992}.
\begin{theorem}\label{fib_entropy_bogenschutz}
    Let $(\mathcal{T},\sigma)$ be a map cocycle with the family of transfer operators $(\mathcal{L}_{\omega})_{\omega\in\Omega}.$ Then $$0=\sup_{\mu\ \mathcal{T}-inv.}\left\{h_{\mu}^{fib}(\mathcal{T})-\int_{\Omega\times X}\log|T_{\omega}'| \,d\mu\right\}.$$
\end{theorem}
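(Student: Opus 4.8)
The plan is to obtain this as a consequence of Bogenschütz's variational principle for random dynamical systems \cite[Theorem 6.1]{bogenschutz_1992}, which, for a map cocycle $(\mathcal{T},\sigma)$ and a fibrewise continuous potential $\varphi=(\varphi_\omega)_{\omega\in\Omega}$, identifies the relative (fibre) topological pressure $\pi_{fib}(\varphi)$ with
$$\pi_{fib}(\varphi)=\sup_{\mu\ \mathcal{T}\text{-inv.}}\left\{h_\mu^{fib}(\mathcal{T})+\int_{\Omega\times X}\varphi_\omega\,d\mu\right\},$$
the supremum being over all $\mathcal{T}$-invariant probability measures $\mu$ projecting to $\mathbb{P}$. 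Applying this with the geometric potential $\varphi_\omega:=-\log|T_\omega'|$ turns the supremum in the statement into $\pi_{fib}(-\log|T'|)$, so the theorem reduces to showing that this relative pressure vanishes.

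To compute $\pi_{fib}(-\log|T'|)$ I would work with the transfer operators $\mathcal{L}_\omega^{(n)}=\mathcal{L}_{\sigma^{n-1}\omega}\circ\cdots\circ\mathcal{L}_\omega$. By the chain rule, for $\mathbb{P}$-a.e.\ $\omega$ and every $n$,
$$\mathcal{L}_\omega^{(n)}1(x)=\sum_{y:\,T_\omega^{(n)}(y)=x}\frac{1}{|(T_\omega^{(n)})'(y)|}=\sum_{y:\,T_\omega^{(n)}(y)=x}\exp\Big(\sum_{k=0}^{n-1}\varphi_{\sigma^k\omega}\big(T_\omega^{(k)}(y)\big)\Big),$$
which is exactly the partition sum of $\varphi$ over the fibre above $\omega$ that governs the relative pressure. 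On the other hand, Definition \ref{transfer_operator} applied with $A=X$ and $f\equiv 1$ (together with $m(X)=1$) gives $\int_X\mathcal{L}_\omega^{(n)}1\,dm=1$ for every $n$ and a.e.\ $\omega$; hence $\mathcal{L}_\omega^{(n)}1$ has $m$-mean $1$, so
$$\essinf_{x}\mathcal{L}_\omega^{(n)}1(x)\ \le\ 1\ \le\ \esssup_{x}\mathcal{L}_\omega^{(n)}1(x).$$
Using the bounded distortion available for the class of maps at hand — so that preimage sets of a point are, up to subexponential corrections, $(n,\varepsilon)$-separated and $(n,\varepsilon)$-spanning, and $\esssup_x\mathcal{L}_\omega^{(n)}1(x)$ and $\essinf_x\mathcal{L}_\omega^{(n)}1(x)$ agree up to a subexponential factor — one concludes that $\frac1n\int_\Omega\log\esssup_x\mathcal{L}_\omega^{(n)}1(x)\,d\mathbb{P}(\omega)\to0$, and likewise with $\essinf$ in place of $\esssup$; so $\pi_{fib}(-\log|T'|)=0$, which together with the variational principle yields the claim.

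The main obstacle is this last step: matching the relative topological pressure $\pi_{fib}(-\log|T'|)$ — which Bogenschütz defines via separated/spanning sets in the fibres — with the exponential growth rate of the normalised partition functions $\mathcal{L}_\omega^{(n)}1$, and then reading off that this rate is $0$. This requires the fibrewise regularity of $\log|T_\omega'|$ (continuity, or bounded variation) and enough control on distortion to pass between the operator identity $\int_X\mathcal{L}_\omega^{(n)}1\,dm=1$ and the geometric partition functions. In the application the relevant cocycles are admissible Blaschke product cocycles, for which $|T_\omega'|$ is continuous and bounded away from $0$ and $\infty$ on $\mathbb{T}$ and the distortion bounds hold, so this goes through, and the two-sided bound $\essinf\le1\le\esssup$ then pins the pressure at $0$ from both sides at once.
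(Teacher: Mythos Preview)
The paper does not give a proof of this statement; it simply declares it a corollary of \cite[Theorem~6.1]{bogenschutz_1992} and moves on. Your approach is entirely consistent with this: you invoke the same variational principle of Bogensch\"utz and specialise to the geometric potential $\varphi_\omega=-\log|T_\omega'|$, which is exactly how one extracts the stated identity.

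The work you add beyond this --- arguing that $\pi_{fib}(-\log|T'|)=0$ via the normalisation $\int\mathcal{L}_\omega^{(n)}1\,dm=1$ and distortion control --- is more than the paper provides, and is a reasonable sketch in the Blaschke setting where it is actually used. Two remarks: first, the theorem as stated in the paper is phrased for a general map cocycle, so your reliance on distortion properties specific to Blaschke products does not match that generality (the paper is evidently leaning on Bogensch\"utz to carry this at whatever level of generality his Theorem~6.1 covers). Second, your identification of the ``main obstacle'' --- matching Bogensch\"utz's separated/spanning-set definition of relative pressure with the growth of $\mathcal{L}_\omega^{(n)}1$ --- is accurate, and that step is genuinely nontrivial; you are right not to pretend otherwise. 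But since the paper itself makes no attempt at this step, your proposal is already at least as complete as what appears there.
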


\begin{definition}[Blaschke product cocycle] 
Let $\sigma$ be an invertible, ergodic, measure-preserving transformation of a probability space $(\Omega, \mathcal{F}, \mathbb{P}).$ Let $\mathcal{T}=(T_{\omega})_{\omega\in\Omega},$ where
     for each $\omega\in\Omega,$ we let   $T_{\omega}:\hat{\mathbb{C}}\to\hat{\mathbb{C}}$ be given by $$T_{\omega}(z)=\rho_{\omega}\prod_{i=1}^{n_{\omega}}\frac{z-a_{i,\omega}}{1-\bar{a}_{i,\omega}z},$$ where  $n:\Omega\to\mathbb{N},\rho:\Omega\to\mathbb{T}$ are measurable and for each $j\ge1$ such that $\Omega_{j}:=\{\omega\in\Omega:m_{\omega}=j\}$ is non-empty, $a:\Omega_{j}\to D^{j}, D^{j}:=\underbrace{D\times\cdots\times D}_{j \text{ times}}$ is measurable. The Blaschke product cocycle is $(\mathcal{T}, \sigma),$ or simply $\mathcal{T}.$ We let $T_{\omega}^{(n)}=T_{\sigma^{n-1}\omega}\circ\cdots\circ T_{\omega}.$
\end{definition}

The following theorem gives a sufficient condition on a Blaschke product cocycle for the existence of measurable $x:\Omega\to D$ such that for $\mathbb{P}$-a.e. $\omega\in\Omega$ and all $z\in D,$ $x_{\omega}=\lim_{n\to\infty}T_{\sigma^{-n}\omega}^{(n)}(z).$ We call $(x_{\omega})_{\omega\in\Omega}$ a random fixed point.
\begin{theorem}[{\cite[Theorem 1(1)]{GTQ}}]\label{gtq_rfp}
    Let $\sigma$ be an invertible ergodic measure-preserving transformation of a probability space $(\Omega,\mathbb{P}).$ Let $r_{T}(R)=\sup_{|z|=R}|T(z)|.$ We denote $r_{\mathcal{T}}(R)=\mathrm{ess}\sup_{\omega\in\Omega}r_{T_{\omega}}(R).$ Let $R<1$ and let $\mathcal{T}=(T_{\omega})_{\omega\in\Omega}$ be a Blaschke product cocycle, depending measurably on $\omega,$ satisfying $r:=r_{\mathcal{T}}(R)<R.$ Then, there exists a measurable map $x:\Omega\to\bar{D}_{r}$ (with $x(\omega)$ written as $x_{\omega}$), such that $T_{\omega}(x_{\omega})=x_{\sigma\omega},$ furthermore, for all $z\in D_{R},$ $T_{\sigma^{-N}\omega}^{(N)}(z):=T_{\sigma^{-1}\omega}\circ\cdots\circ T_{\sigma^{-N}\omega}(z)\to x_{\omega}$ as $N\to\infty.$
\end{theorem}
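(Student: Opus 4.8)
The plan is to realize $x_\omega$ as the limit of the backward compositions $T^{(N)}_{\sigma^{-N}\omega}$ acting on the disc $D_R$, by turning the hypothesis $r<R$ into a genuine uniform contraction via the Schwarz--Pick lemma. First I would record the elementary but crucial consequence of $r:=r_{\mathcal T}(R)<R$: for $\mathbb P$-a.e.\ $\omega$ the map $T_\omega$ is holomorphic on $\bar D_R$ (its only poles are the reflections $1/\bar a_{i,\omega}$ of points of $D$, which lie outside $\bar D$), and $|T_\omega|\le r$ on $\partial D_R$, so by the maximum modulus principle $T_\omega(\bar D_R)\subseteq\bar D_r$. Discarding a null set and intersecting over all $\sigma$-translates (which is harmless since $\sigma$ preserves $\mathbb P$), I may assume this holds along the whole orbit of every $\omega$ in a fixed $\sigma$-invariant full-measure set $\Omega_0$.

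The key estimate: on $\Omega_0$ each $T_\omega$ factors as $D_R \xrightarrow{S_\omega} D_r \hookrightarrow D_R$, so Schwarz--Pick gives $\rho_r(S_\omega z, S_\omega w)\le \rho_R(z,w)$, while the inclusion $D_r\hookrightarrow D_R$ is a strict contraction: comparing the densities $\lambda_R(z)=\tfrac{2R}{R^2-|z|^2}$ and $\lambda_r(z)=\tfrac{2r}{r^2-|z|^2}$ of the two hyperbolic metrics on $D_r$, the ratio $\lambda_R(z)/\lambda_r(z)$ is decreasing in $|z|$ and hence bounded by its value $r/R$ at the centre. Integrating along paths yields $\rho_R(\iota a,\iota b)\le \tfrac rR\,\rho_r(a,b)$ on $D_r$, and composing,
\[
\rho_R\bigl(T_\omega(z),T_\omega(w)\bigr)\le \tfrac rR\,\rho_R(z,w),\qquad z,w\in D_R,\ \omega\in\Omega_0.
\]
Now fix $\omega\in\Omega_0$ and put $y_N(\omega):=T^{(N)}_{\sigma^{-N}\omega}(0)$, so $y_N(\omega)\in\bar D_r$ for $N\ge1$. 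Using the cocycle identity $y_M(\omega)=T^{(N)}_{\sigma^{-N}\omega}\bigl(y_{M-N}(\sigma^{-N}\omega)\bigr)$ for $M>N$ and iterating the contraction $N$ times,
\[
\rho_R\bigl(y_M(\omega),y_N(\omega)\bigr)\le \Bigl(\tfrac rR\Bigr)^{N}\rho_R\bigl(y_{M-N}(\sigma^{-N}\omega),0\bigr)\le \Bigl(\tfrac rR\Bigr)^{N}\mathrm{diam}_{\rho_R}(\bar D_r),
\]
and $\mathrm{diam}_{\rho_R}(\bar D_r)<\infty$ because $\bar D_r$ is compact in $D_R$. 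Thus $(y_N(\omega))_N$ is Cauchy in the complete metric space $(D_R,\rho_R)$ and, being eventually confined to the closed subset $\bar D_r$, converges there to a point $x_\omega\in\bar D_r$. Replacing the base point $0$ by an arbitrary $z\in D_R$ gives $\rho_R\bigl(T^{(N)}_{\sigma^{-N}\omega}(z),y_N(\omega)\bigr)\le(r/R)^N\rho_R(z,0)\to0$, so $T^{(N)}_{\sigma^{-N}\omega}(z)\to x_\omega$ for every $z\in D_R$ (indeed locally uniformly in $z$).

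It remains to check the remaining assertions, which are routine. Equivariance: from $y_N(\sigma\omega)=T_\omega\bigl(y_{N-1}(\omega)\bigr)$ and continuity of $T_\omega$ on $\bar D_r$, letting $N\to\infty$ gives $x_{\sigma\omega}=T_\omega(x_\omega)$. Measurability: since the cocycle depends measurably on $\omega$, each $\omega\mapsto T^{(N)}_{\sigma^{-N}\omega}(0)$ is measurable, and $x_\omega$ is their pointwise limit. I expect the only genuine subtlety to be upgrading the ``soft'' Schwarz--Pick weak contraction to the explicit uniform factor $r/R$ — everything after that is a completeness-and-limits argument. One could alternatively dispense with the hyperbolic metric and work directly with Cauchy estimates for the Euclidean derivative of $T_\omega$ on $\bar D_r$, at the cost of marginally worse constants.
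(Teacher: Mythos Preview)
The paper does not give its own proof of this statement: it is quoted as \cite[Theorem~1(1)]{GTQ} and used as a black box, so there is no in-paper argument to compare against. Your proof is self-contained and correct; the Schwarz--Pick/hyperbolic-contraction approach you outline (factoring $T_\omega$ as $D_R\to D_r\hookrightarrow D_R$ and extracting the uniform factor $r/R$ from the inclusion) is indeed the standard way to prove such a result, and the subsequent Cauchy-sequence, equivariance, and measurability checks are routine exactly as you say. One minor remark: your claim that $\lambda_R/\lambda_r$ attains its maximum $r/R$ at the centre of $D_r$ is correct (the ratio is $\tfrac{R(r^2-|z|^2)}{r(R^2-|z|^2)}$, strictly decreasing in $|z|$), so the key contraction estimate goes through with exactly the constant $r/R$.
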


\section{Measure theoretic entropy for an admissible Blaschke product cocycle}\label{entropy_section}

In this section, we obtain a formula for the measure theoretic entropy of an admissible Blaschke product cocycle $\mathcal{T}.$

For $B\subset\mathbb{T},$ let $m(B)$ denote the normalized Lebesgue measure of $B,$ so that $m(\mathbb{T})=1$. The proof of the following lemma follows similarly to \cite[Example 6]{a.s.invariance}. 
\begin{lemma}\label{covering}
     Take non-trivial arc $A\subset\mathbb{T}$ such that $m(A)<1.$ Suppose $\mathcal{T}$ is an admissible Blaschke product cocycle. Then, for $\mathbb{P}$-a.e. $\omega\in\Omega,$ there exists $n_{c}(\omega, A):=n_{c}(\omega)$ such that for every $n\ge n_{c}(\omega)$ we have $m(T_{\sigma^{-n}\omega}^{(n)}(A))=1.$
\end{lemma}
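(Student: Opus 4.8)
The plan is to show that under the admissibility hypotheses (which I expect include an expansion-on-average condition analogous to condition (\ref{BB_2}) of Theorem \ref{buzzi}, together with measurability and the covering-type condition (\ref{BB_5})), any non-trivial arc $A$ with $m(A) < 1$ eventually spreads to cover all of $\mathbb{T}$ under the composed maps $T_{\sigma^{-n}\omega}^{(n)}$. The key structural fact about Blaschke products is that each $T_\omega$ restricted to $\mathbb{T}$ is a local homeomorphism with $|T_\omega'| > 0$ everywhere (stated just after Definition \ref{blaschke_defn}) that wraps $\mathbb{T}$ around itself $n_\omega$ times; consequently each $T_\omega$ is an open map on $\mathbb{T}$, so $T_{\sigma^{-n}\omega}^{(n)}(A)$ is always open, and its measure is $1$ precisely when this open set is all of $\mathbb{T}$. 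So the statement reduces to showing the open arc (or finite union of arcs) $T_{\sigma^{-n}\omega}^{(n)}(A)$ grows to everything.

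First I would track the image $A_n := T_{\sigma^{-n}\omega}^{(n)}(A)$, which is an open subset of $\mathbb{T}$; as long as $A_n \neq \mathbb{T}$, it is a finite disjoint union of open arcs. The strategy is to control the measure (or, more robustly, the length of the longest component arc) of $A_n$ from below. Since each $T_\omega$ acts on $\mathbb{T}$ by a map whose derivative magnitude $|T_\omega'| = \sum_i \frac{1-|a_{i,\omega}|^2}{|z - a_{i,\omega}|^2}$ is everywhere strictly positive, on the set where $|T_\omega'| \ge 1$ the map is locally non-contracting; the expanding-on-average hypothesis guarantees that along $\mathbb{P}$-a.e. orbit $(\sigma^{-n}\omega)$, the Birkhoff-type averages of $\log \inf |T'|$ (truncated as in (\ref{BB_2})) are eventually positive, so that composed maps expand arc-length. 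Using this I would argue that the longest component arc of $A_n$ has length bounded below by $\min(c\,\lambda_n, \text{const})$ where $\lambda_n \to \infty$ exponentially in the sense of the ergodic average; once a component arc reaches a definite length $\delta_0$ (independent of $\omega$), a fixed finite number of further applications of the covering condition (\ref{BB_5}) — or directly the fact that a sufficiently long arc, once pushed forward, wraps all the way around $\mathbb{T}$ because the map has degree $\ge 1$ and covers $\mathbb{T}$ — forces $A_{n+k} = \mathbb{T}$. This is exactly the mechanism in \cite[Example 6]{a.s.invariance} that the lemma statement cites.

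The main obstacle, and the step requiring care, is handling the branch points: $A_n$ can fail to be a single arc and its components can be short, so a naive "length multiplies by $|T'|$" argument breaks at the finitely many critical/turning points $b_0, \dots, b_{n_\omega}$ of $S_\omega = Q^{-1}T_\omega Q$. The fix is the standard one: at each step either some component arc of $A_n$ avoids all branch points of $T_{\sigma^{-n-1}\omega}$ (in which case its image is a single arc of length $\ge \inf|T'| \cdot (\text{its length})$, up to the modulus-$1$ identification), or $A_n$ already contains a branch point and hence its forward image is "large" in a controlled way; a pigeonhole on the (finitely many, integrably bounded by (\ref{BB_3})) branch points combined with the positive ergodic average of $\log \inf|T'|$ shows the longest-arc length cannot stay bounded away from $\delta_0$ forever. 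I would also need the integrability conditions (\ref{BB_3})–(\ref{BB_4}) only insofar as they guarantee, via Birkhoff / Kingman, that the relevant exceptional behaviour along $\mathbb{P}$-a.e. orbit is sparse enough not to obstruct the eventual covering.

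Finally, once a component arc of length $\ge \delta_0$ is produced at some (a.e.-finite) time $n_1(\omega)$, I would invoke the covering hypothesis built into admissibility — essentially (\ref{BB_5}), that for every non-trivial $I$ and a.e. $\omega$ the iterates $\mathcal{L}_\omega^{(n)} 1_I$ are eventually bounded below, which is equivalent to $T_{\sigma^{-n}\omega}^{(n)}(I) = \mathbb{T}$ for large $n$ — applied to $I$ equal to this long arc, to conclude $m(A_n) = 1$ for all $n \ge n_c(\omega)$. Because the time to first produce a length-$\delta_0$ arc and the time for (\ref{BB_5}) to kick in are both finite for $\mathbb{P}$-a.e. $\omega$, their sum gives the desired $n_c(\omega)$, and monotonicity ($A_n = \mathbb{T} \implies A_{n+1} = T_{\sigma^{-n-1}\omega}(\mathbb{T}) = \mathbb{T}$) extends it to all later times.
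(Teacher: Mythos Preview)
Your proposal contains the right core idea (Birkhoff on $\log\inf|T'_\omega|$ forces arc length to grow), but you have layered two unnecessary complications on top of it, one of which is circular.

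First, the branch-point worry is unfounded. For a Blaschke product $T$ one has $|T'(z)|=\sum_i \frac{1-|a_i|^2}{|z-a_i|^2}>0$ for every $z\in\mathbb{T}$, so $T$ restricted to $\mathbb{T}$ is a smooth orientation-preserving covering map. The points $b_0,\dots,b_{n_\omega}$ appearing in $S_\omega=Q^{-1}T_\omega Q$ are artifacts of cutting the circle open at a point; they are not critical points. Consequently the image of an arc under any composition $T_{\sigma^{-n}\omega}^{(n)}$ is a single arc (or all of $\mathbb{T}$), never a disjoint union. Lifting to $\mathbb{R}$, one sees directly that if $m(T_{\sigma^{-n}\omega}^{(n)}(A))<1$ then $m(T_{\sigma^{-n}\omega}^{(n)}(A))\ge m(A)\prod_{k=1}^n\inf_{z\in\mathbb{T}}|T'_{\sigma^{-k}\omega}(z)|$. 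The entire pigeonhole/longest-component discussion can be deleted.

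Second, and more seriously, your final step invokes condition~\ref{BB_5} of Theorem~\ref{buzzi} to finish. This is circular: in the paper, Lemma~\ref{covering} is precisely what is used to \emph{verify}~\ref{BB_5} for admissible Blaschke product cocycles (see the proof of Theorem~\ref{buzzi_blaschke}\ref{main_thm_diff_bound}). Admissibility (Definition~\ref{admissible_BP}) consists only of conditions~\ref{ABP_1}--\ref{ABP_4}; it does not include a covering hypothesis. Fortunately you do not need~\ref{BB_5} at all: once the single-arc picture is in place, the Birkhoff argument gives $\prod_{k=1}^n\inf|T'_{\sigma^{-k}\omega}|\ge e^{n\Lambda/2}$ for $n\ge n_0(\omega)$, so as soon as $n\ge\max\big(n_0(\omega),\lceil -\tfrac{2}{\Lambda}\log m(A)\rceil\big)$ the assumption $m(T_{\sigma^{-n}\omega}^{(n)}(A))<1$ yields the contradiction $m(T_{\sigma^{-n}\omega}^{(n)}(A))\ge1$. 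This handles every $n\ge n_c(\omega)$ directly, so your monotonicity claim $A_n=\mathbb{T}\Rightarrow A_{n+1}=\mathbb{T}$ (which is also not justified, since $A_{n+1}=T_{\sigma^{-n}\omega}^{(n)}\big(T_{\sigma^{-(n+1)}\omega}(A)\big)$, not $T_{\sigma^{-(n+1)}\omega}(A_n)$) is not needed either.
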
 

\begin{proof}
    The measure $m$ can be thought of as the normalized arc length measure on the unit circle. Assuming $m(T_{\sigma^{-n}\omega}^{(n)}(A))<1,$ we have $$m(T_{\sigma^{-n}\omega}^{(n)}(A))\ge\infzt|(T_{\sigma^{-n}\omega}^{(n)})'(z)|m(A)\ge m(A)\prod_{k=1}^n\infzt|T_{\sigma^{-k}\omega}'(z)|.$$ By the Birkhoff ergodic theorem, for $\mathbb{P}$-a.e. $\omega\in\Omega,$ $$\lim_{n\to\infty}\frac{1}{n}\sum_{k=1}^{n}\infzt|T_{\sigma^{-k}\omega}'(z)|=\int_{\Omega}\log\infzt|T_{\omega}'(z)|\,d\pw=:\Lambda>0.$$ We choose $n_{0}:=n_{0}(\omega)$ sufficiently large such that $\frac{1}{n_{0}}\sum_{k=1}^{n_{0}}\infzt|T_{\sigma^{-k}\omega}'(z)|\ge\Lambda/2.$  Thus, for all $n\ge n_{0},$ we have $$\prod_{k=1}^{n}\infzt|T_{\sigma^{-k}\omega}'(z)|\ge e^{n\Lambda/2}.$$ Now, taking $n_{c}(\omega)=\max\left(n_{0}(\omega),\lceil-\frac{2}{\Lambda}\log m(A)\rceil\right)$ it follows that $m(T_{\sigma^{-n}\omega}^{(n)}(A))=1$ for $n\ge n_{c}(\omega).$
\end{proof}

\begin{definition}[Admissible Blaschke product cocycle]\label{admissible_BP}
    Consider a Blaschke product cocycle $(\mathcal{T},\sigma).$ Let $n_\omega:=\deg(T_{\omega}).$ Suppose the following conditions hold:
    \begin{enumerate}
    \item $\omega\mapsto\left(\infzt|T_{\omega}'(z)|, \var\left(\frac{1}{|T_{\omega}'|}\right)\right)$ is measurable on $\Omega;$ \label{ABP_1} 
        \item $\int_{\Omega}\log\infzt|T_{\omega}'(z)|\,d\pw>0;$ \label{ABP_2}
        \item $\int_{\Omega}\frac{n_{\omega}}{\infzt |T_{\omega}'(z)|}\,d\pw<\infty;$ and \label{ABP_3}
        \item $\int_{\Omega}\tint\left|\frac{T''}{(T')^2}\right|\,dm\,d\mathbb{P}(\omega)<\infty.$ \label{ABP_4}
    \end{enumerate} Then $(\mathcal{T}, \sigma),$ or simply $\mathcal{T},$ is called an admissible Blaschke product cocycle.
\end{definition}

\begin{lemma}\label{rim}
    Assume $(\mathcal{T}, \sigma)$ is a Blaschke product cocycle. Suppose there is measurable $x:\Omega\to D$ such that $T_{\omega}(x_{\omega})=x_{\sigma\omega}.$ Suppose $\mu$ is a measure on $\Omega\times X$ with disintegration $\{\mu_{\omega}\}_{\omega\in\Omega}$ and marginal $\mathbb{P}$ with respect to $\Omega$ such that $\frac{d\mu_{\omega}}{dm}:=P_{x_{\omega}}.$ Then $\{\mu_{\omega}\}_{\omega\in\Omega}$ is a $\mathcal{T}-$invariant measure.
\end{lemma}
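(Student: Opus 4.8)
The plan is to check the three defining properties of a $\mathcal{T}$-invariant measure for $\{\mu_{\omega}\}_{\omega\in\Omega}$: that each $\mu_{\omega}$ is a probability measure, that $\omega\mapsto\mu_{\omega}$ is measurable, and that $(T_{\omega})_{*}(\mu_{\omega})=\mu_{\sigma\omega}$ for $\mathbb{P}$-a.e. $\omega$. The first two are routine; the real content is the invariance, and it will come directly from Theorem \ref{poisson_aut}.

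For the first point, since $x_{\omega}\in D$ the density $P_{x_{\omega}}$ is continuous and strictly positive on $\mathbb{T}$, and applying Proposition \ref{harmonic} to the constant function $f\equiv 1$ (whose harmonic extension to the disc is again $\equiv 1$) gives $\int_{\mathbb{T}}P_{x_{\omega}}\,dm=\tilde{1}(x_{\omega})=1$, so $\mu_{\omega}$ is a probability measure. For measurability, I would observe that for every $f\in C(\mathbb{T})$ the map $\omega\mapsto\int_{\mathbb{T}}f\,d\mu_{\omega}=\int_{\mathbb{T}}f(z)P_{x_{\omega}}(z)\,dm(z)=\tilde{f}(x_{\omega})$ is the composition of the measurable map $\omega\mapsto x_{\omega}$ with the continuous map $x\mapsto\tilde{f}(x)$ on $D$, hence measurable; since $C(\mathbb{T})$ is separable, this shows $\omega\mapsto\mu_{\omega}$ is measurable into the space of Borel probability measures on $\mathbb{T}$ with its weak$^{*}$ Borel structure.

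For the invariance I would fix $\omega$ in the full-measure set on which $T_{\omega}(x_{\omega})=x_{\sigma\omega}$ (this relation is among the hypotheses), take an arbitrary $f\in C(\mathbb{T})$, and compute using the definition of the pushforward, the identity $d\mu_{\omega}=P_{x_{\omega}}\,dm$, Theorem \ref{poisson_aut} applied to $T=T_{\omega}$ and $x=x_{\omega}$, and finally $T_{\omega}(x_{\omega})=x_{\sigma\omega}$ together with $d\mu_{\sigma\omega}=P_{x_{\sigma\omega}}\,dm$:
\begin{align*}
\int_{\mathbb{T}}f\,d\big((T_{\omega})_{*}\mu_{\omega}\big) &= \int_{\mathbb{T}}(f\circ T_{\omega})\cdot P_{x_{\omega}}\,dm \\
&= \int_{\mathbb{T}}f\cdot P_{T_{\omega}(x_{\omega})}\,dm \\
&= \int_{\mathbb{T}}f\cdot P_{x_{\sigma\omega}}\,dm = \int_{\mathbb{T}}f\,d\mu_{\sigma\omega}.
\end{align*}
Since two finite Borel measures on $\mathbb{T}$ that agree when integrated against every continuous function are equal (Riesz representation), this yields $(T_{\omega})_{*}\mu_{\omega}=\mu_{\sigma\omega}$, which completes the verification.

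I do not expect a genuine obstacle: the statement is essentially the cocycle version of the observation preceding Theorem \ref{det_entropy} (that $\mu_{x}$ is $T$-invariant whenever $x$ is a fixed point of $T$), with the fixed point replaced by the random fixed point $(x_{\omega})_{\omega\in\Omega}$. The only mildly delicate points are bookkeeping: that $T_{\omega}$, although defined on $\hat{\mathbb{C}}$, restricts to a self-map of $\mathbb{T}$ (under the identification used throughout) so that the pushforward of a measure on $\mathbb{T}$ is well defined, and the passage from equality of integrals against $C(\mathbb{T})$ to equality of the measures themselves.
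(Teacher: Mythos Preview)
Your proposal is correct and follows essentially the same approach as the paper: the core invariance computation via Theorem \ref{poisson_aut} is line-for-line identical. You are simply more thorough, explicitly verifying that each $\mu_{\omega}$ is a probability measure and that $\omega\mapsto\mu_{\omega}$ is measurable, whereas the paper's proof records only the invariance calculation and leaves those routine checks implicit.
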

\begin{proof}
     Take $f\in C(\mathbb{T}).$  We have \begin{align*}
     \int_{\mathbb{T}}f\circ T_{\omega}\,d\mu_{\omega}&=\int_{\mathbb{T}}f\circ T_{\omega}P_{x_{\omega}}\,dm\\&=\tint fP_{T_{\omega}(x_{\omega})}\,dm\\&=\tint fP_{x_{\sigma\omega}}\,dm\\&=\tint f\,d\mu_{\sigma\omega},
     \end{align*} where the second line follows from Theorem \ref{poisson_aut}. 
\end{proof}

We show that the conditions for an admissible Blaschke product cocycle can be identified with the hypotheses of Theorem \ref{buzzi} and use this result to describe the unique absolutely continuous invariant measure for such a cocycle.
\begin{proof}[Proof of Theorem \ref{buzzi_blaschke}\ref{main_thm_diff_bound}]
    Let $(\mathcal{T}, \sigma)$ be an admissible Blaschke product cocycle as in Definition \ref{admissible_BP}. Since each Blaschke product $T_{\omega}$ is an $n_{\omega}$-to-$1$ local homeomorphism, it follows that Definition \ref{LY}\ref{homeo} is satisfied. Definition \ref{LY}\ref{nonsingular} is satisfied since any Blaschke product $T$ is a local diffeomorphism and thus nonsingular with respect to Lebesgue \cite{dynamics_of_circle}. 
    
    Next we check the conditions of Theorem \ref{buzzi}. Let $S_{\omega}=Q^{-1}T_{\omega}Q$ as in Definition \ref{blaschke_defn}. First note that $\left|\frac{dS_{\omega}(t)}{dt}\right|=\left|\frac{dT_{\omega}(z)}{dz}\right|.$  Let $\Omega_{\le K}:=\{\omega:\infzt|T_{\omega}'(z)|\le K\}.$ We have $$\lim_{K\to\infty}\int_{\Omega}\log\min(\infzt|T_{\omega}'(z)|,K)d\pw\ge\lim_{K\to\infty}\int_{\Omega_{\le K}}\log\infzt|T_{\omega}'(z)|\,d\pw,$$ and since $\int_{\Omega_{\le K}}\log\infzt|T_{\omega}'(z)|\,d\pw\le\int_{\Omega}\log\infzt|T_{\omega}'(z)|\,d\pw$ for any $k\in\mathbb{N},$ by the monotone convergence theorem $$\lim_{K\to\infty}\int_{\Omega_{\le K}}\log\infzt|T_{\omega}'(z)|\,d\pw=\int_{\Omega}\log\infzt|T_{\omega}'(z)|\,d\mathbb{P}(\omega)>0.$$ Thus Definition \ref{admissible_BP}\ref{ABP_3} implies Theorem \ref{buzzi}\ref{BB_2}.

    From $\left|\frac{dS_{\omega}(t)}{dt}\right|=\left|\frac{dT_{\omega}(z)}{dz}\right|,$ it follows that $\text{var}\left(\frac{1}{|T_{\omega}'|}\right)=\int_{\mathbb{T}}\left|\frac{T_{\omega}''}{(T_{\omega}')^2}\right|\,dm.$ Thus, Definition \ref{admissible_BP}\ref{ABP_4} is equivalent to Theorem \ref{buzzi}\ref{BB_4}. 
    
    Lastly, Lemma \ref{covering} implies Theorem \ref{buzzi}\ref{BB_5}. Thus, \ref{main_thm_diff_bound} follows from Theorem \ref{buzzi}.
    \end{proof}

\begin{proof}[Proof of Theorem \ref{buzzi_blaschke}\ref{main_thm_rfp}] Take $h^{*}=1.$ From Theorem \ref{buzzi}, it follows that for $\mathbb{P}$-a.e. $\omega\in\Omega$ there exists $h_{\omega}$ of bounded variation such that $\lim_{n\to\infty}\mathcal{L}_{\sigma^{-n}\omega}^{(n)}1=h_{\omega}.$ 

From Lemma \ref{duality} and Lemma \ref{poisson_aut}, we have $$\int_{\bbt}f\cdot\mathcal{L}_{\sigma^{-n}\omega}^{(n)}1\,dm=\int_{\bbt}f\circ T_{\sigma^{-n}\omega}^{(n)}\,dm=\tint fP_{T_{\sigma^{-n}\omega}^{(n)}(0)}\,dm$$ for all $f\in C(\mathbb{T}).$ It follows that $\forall n\in\mathbb{N},$ $\mathcal{L}_{\sigma^{-n}\omega}^{(n)}1=P_{T_{\sigma^{-n}\omega}^{(n)}(0)}$ and thus from Theorem \ref{buzzi}\ref{BB_diff} that $\lim_{n\to\infty}P_{T_{\sigma^{-n}\omega}^{(n)}(0)}=h_{\omega}$ in $L^{\infty}.$

Now we will show for $\mathbb{P}$-a.e. $\omega\in\Omega$ the existence of the limit $y_{\omega}=T_{\sigma^{-n}\omega}^{(n)}(0),$ and that $h_{\omega}=P_{y_{\omega}}.$   Consider the sequence $T_{\sigma^{-n}\omega}^{(n)}(0)\subset D.$ This must have a convergence subsequence. In particular, there must be a subsequence $(n_{\omega,k}):=(n_{k})$ such that $\lim_{k\to\infty}T_{\sigma^{-n_{k}}\omega}^{(n_{k})}(0)=y_{\omega}\in\bar{D}.$

Suppose that $y_{\omega}\in\mathbb{T},$ and so $|T_{\sigma^{-n}\omega}^{(n)}(0)|\to1.$ From Lemma \ref{l_infty}, it follows that $||h_{\omega}||_{\infty}=\lim_{n\to\infty}||P_{T_{\sigma^{-n}\omega}^{(n)}(0)}||_{\infty}=\infty.$ This can only happen on $\mathbb{P}$ measure 0, since $h_{\omega}$ is of bounded variation for $\mathbb{P}$-a.e. $\omega\in\Omega,$ in particular $||h_{\omega}||_{\infty}<\infty.$ Thus, we must have $y_{\omega}\in D$ for $\mathbb{P}$-a.e. $\omega\in\Omega.$
 
  If $y_{\omega}\in D,$ then from Lemma \ref{poisson_diff}, $$\lim_{k\to\infty}|T_{\sigma^{-n_{k}}\omega}^{(n_{k})}(0)-y_{\omega}|=0\implies \lim_{k\to\infty}\left|\left|P_{T_{\sigma^{-n_{k}}\omega}^{(n_{k})}(0)}-P_{y_{\omega}}\right|\right|_{\infty}=0.$$ Furthermore, the uniqueness of $h_{\omega}$ gives $\lim_{n\to\infty}T_{\sigma^{-n}\omega}^{(n)}(0)=y_{\omega}.$ For fixed $z\in D,$ taking $h_{*}=P_{z}$ gives $\lim_{n\to\infty}T_{\sigma^{-n}\omega}^{(n)}(z)=y_{\omega},$ and thus \ref{main_thm_rfp} holds. 
 
  We call $(y_{\omega})_{\omega\in\Omega}$ a random fixed point of $\mathcal{T}.$

 Lastly, let $\mu$ be the measure with disintegration $\{\mu_{\omega}\}_{\omega\in\Omega}$ with respect to $\mathbb{P},$ where $\frac{d\mu_{\omega}}{dm}=P_{y_{\omega}}.$  It follows from Lemma \ref{rim} that $\{\mu_{\omega}\}_{\omega\in\Omega}$ is $\mathcal{T}$-invariant.

\end{proof}

We now give the measure theoretic entropy of $\mathcal{T}$ with respect to $\mu.$
\begin{corollary}\label{skew_product_entropy}
    Let $\mathcal{T}$ be an admissible random Blaschke product with $\mu$ its random acim as in Theorem \ref{buzzi_blaschke}. Then, the metric entropy with respect to $\mu$ of $\mathcal{T}$ is given by $$h_{\mu}(\mathcal{T})=h_{\mathbb{P}}(\sigma)+\int_{\Omega}\int_{\mathbb{T}}\log|T'_{\omega}|\,d\mu_{\omega}\,d\mathbb{P}(\omega).$$ 
\end{corollary}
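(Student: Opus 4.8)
The plan is to obtain the entropy formula for $\mathcal{T}$ by combining three ingredients already available in the excerpt: Bogensch\"utz's decomposition of metric entropy into fibre entropy plus base entropy (Lemma \ref{skew}), the Bogensch\"utz--Ruelle--Margulis-type identity from Theorem \ref{fib_entropy_bogenschutz}, which gives $\sup_{\mu'\ \mathcal{T}\text{-inv.}}\bigl\{h^{fib}_{\mu'}(\mathcal{T})-\int_{\Omega\times X}\log|T'_\omega|\,d\mu'\bigr\}=0$, and the explicit description of the random acim $\mu$ from Theorem \ref{buzzi_blaschke}, namely $d\mu_\omega/dm=P_{y_\omega}$ with $(y_\omega)$ the random fixed point. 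By Lemma \ref{skew} it suffices to show $h^{fib}_\mu(\mathcal{T})=\int_\Omega\int_{\mathbb{T}}\log|T'_\omega|\,d\mu_\omega\,d\mathbb{P}(\omega)$.

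First I would establish the inequality $h^{fib}_\mu(\mathcal{T})\le\int_{\Omega\times X}\log|T'_\omega|\,d\mu$: this is immediate from Theorem \ref{fib_entropy_bogenschutz} applied to the particular invariant measure $\mu$ (the supremum being $0$ forces each term to be $\le 0$). Note the finiteness of the right-hand integral should be checked — since $d\mu_\omega/dm=P_{y_\omega}$ with $y_\omega\in D$ and $|T'_\omega|$ is bounded above on $\mathbb{T}$ by $n_\omega/\mathrm{dist}$-type estimates, admissibility condition \ref{ABP_3} (or a variant) should control $\int\log|T'_\omega|\,d\mu_\omega$; I would spell this out, perhaps using that $\int_{\mathbb{T}}\log|T'_\omega|\,d\mu_\omega$ is comparable to the deterministic quantity via the Poisson kernel and the sub-mean-value property, and that it is integrable in $\omega$.

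The reverse inequality $h^{fib}_\mu(\mathcal{T})\ge\int_{\Omega\times X}\log|T'_\omega|\,d\mu$ is the substantive step. Here I would argue that $\mu$ is in fact the acim, so the fibre version of Rokhlin's formula (or the Pesin-type entropy formula in the random setting) applies: for an absolutely continuous random invariant measure of an expanding-on-average cocycle, $h^{fib}_\mu(\mathcal{T})=\int\log|\text{Jacobian}|\,d\mu=\int\int_{\mathbb{T}}\log|T'_\omega|\,d\mu_\omega\,d\mathbb{P}$. Concretely, one takes a generating partition $\mathcal{P}$ of $\mathbb{T}$ into arcs on which each $T_\omega$ is injective (possible because $T_\omega$ has finitely many branches and the cocycle is expanding on average, so refinements shrink — Lemma \ref{covering}-style expansion gives a generator), computes $h^{fib}_\mu(\mathcal{T};\mathcal{P})$ via the conditional-information-function / transfer-operator representation of the fibre entropy, and identifies the limit with $\int\log(\mathcal{L}_\omega h_\omega/h_{\sigma\omega}\circ T_\omega)^{-1}$-type expression which, using $\mathcal{L}_\omega h_\omega=h_{\sigma\omega}$ and the change-of-variables formula for the transfer operator ($\mathcal{L}_\omega f(y)=\sum_{T_\omega x=y}f(x)/|T'_\omega(x)|$), collapses to $\int\int\log|T'_\omega|\,d\mu_\omega\,d\mathbb{P}$. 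Alternatively, and more cheaply, I can lift the deterministic Theorem \ref{det_entropy} fibrewise: the key algebraic identity $h_{\mu_x}(T)=\int\log|T'|\,d\mu_x$ for a single Blaschke product with fixed point $x\in D$ is exactly the fibre-entropy integrand, and since $\mathcal{T}$ achieves equality in Theorem \ref{fib_entropy_bogenschutz} among \emph{all} invariant measures precisely at the acim, combining this with the upper bound already proved closes the gap.

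The main obstacle I anticipate is the reverse inequality: Theorem \ref{fib_entropy_bogenschutz} only gives $h^{fib}_\mu(\mathcal{T})\le\int\log|T'_\omega|\,d\mu$ for every invariant $\mu$, with equality \emph{attained} somewhere but not obviously at our $\mu$; one must genuinely invoke a random Rokhlin/Pesin formula for acims (the expanding-on-average analogue) to certify that the absolutely continuous measure is the maximizer. I would therefore either cite the appropriate random Pesin-formula result (in the spirit of the variational-principle references in the introduction — Bogensch\"utz, Kifer, or the Ledrappier--Young-type identity) applied to this Lasota--Yorke cocycle, or give a direct partition-based computation using the bounded-variation density $h_\omega$ from Theorem \ref{buzzi_blaschke}\ref{main_thm_rfp} and the uniform lower bound $\mathrm{ess\,inf}\,h_\omega>0$ (which follows from Lemma \ref{covering} and Theorem \ref{buzzi}), so that $\log h_\omega$ is bounded and the telescoping $\sum_{i=0}^{n-1}\log h_{\sigma^i\omega}\circ T^{(i)}_\omega$ contributes $o(n)$, leaving exactly the Birkhoff sum of $\int_{\mathbb{T}}\log|T'_\omega|\,d\mu_\omega$. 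Finally, adding $h_{\mathbb{P}}(\sigma)$ via Lemma \ref{skew} (noting $\mathcal{F}$ is countably generated mod $0$, which I would assume or note as a standing hypothesis) yields the stated formula.
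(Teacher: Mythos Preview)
Your approach is essentially the paper's: both reduce via Lemma \ref{skew} to the fibre entropy and then invoke Theorem \ref{fib_entropy_bogenschutz}. The paper's proof is in fact much terser than your outline---it simply asserts that the acim $\mu$ from Theorem \ref{buzzi_blaschke} attains equality in the variational principle of Theorem \ref{fib_entropy_bogenschutz} (treating this as part of the cited Bogensch\"utz result), so the reverse-inequality step you flag as the main obstacle is not argued separately but left to that reference.
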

\begin{proof}
From Lemma \ref{skew}, it follows that $$h_{\mu}(\mathcal{T})=h^{fib}_{\mu}(\mathcal{T})+h_{\mathbb{P}}(\sigma).$$ 

By Theorem \ref{fib_entropy_bogenschutz}, $\sup_{\mu_{0}\text{ }\mathcal{T}-inv.}\left(h^{fib}_{\mu_{0}}(\mathcal{T})+\int_{\Omega\times\mathbb{T}}-\log|T'_{\omega}|\,d\mu_{0}\right)=0.$ Thus, with random acim $\mu$ as in Theorem \ref{buzzi_blaschke}, $$h_{\mu}^{fib}(\mathcal{T})=\int_{\mathbb{T}\times\Omega}\log|T'_{\omega}|\,d\mu=\int_{\Omega}\int_{\mathbb{T}}\log|T'_{\omega}|\,d\mu_{\omega}\,d\mathbb{P}(\omega).$$ Thus the claim follows.
\end{proof}

\begin{example}
    Let $\mathcal{T}$ be a Blaschke product cocycle. Let $A_{\omega}=\sum_{i=1}^{n_{\omega}}\frac{1-|a_{i,\omega}|}{1+|a_{i,\omega}|}.$ Suppose $\mathcal{T}$ satisfies the following assumptions:
    \begin{enumerate}
        \item $\int_{\Omega}\log A_{\omega}\,d\pw>0;$ \label{ex_1}
        \item $\int_{\Omega}\frac{n_{\omega}}{A_{\omega}}\,d\pw<\infty;$ and \label{ex_2}
        \item $\int_{\Omega}\frac{1}{(1-\max_{k=1,...,n_{\omega}}|a_{k,\omega}|)A_{\omega}}\,d\pw<\infty.$ \label{ex_3}
    \end{enumerate} Then, the hypotheses of Theorem \ref{buzzi_blaschke} hold. Indeed, it follows from \cite{Martin_expanding} that $\infzt|T_{\omega}'(z)|\ge\sum_{i=1}^{n_{\omega}}\frac{1-|a_{i,\omega}|}{1+|a_{i,\omega}|},$ and so \ref{ex_1} gives \ref{admissible_BP}\ref{ABP_2}, \ref{ex_2} gives \ref{admissible_BP}\ref{ABP_3}. From applying the triangle inequality, it follows that \ref{ex_3} gives \ref{admissible_BP}\ref{ABP_4}.
    
\end{example}

\section{Average entropy over a family of admissible Blaschke product cocycles}\label{average_section}

Let $\mathcal{T}$ be an admissible Blaschke product cocycle over $\sigma$. For fixed $\theta\in\mathbb{C},$ let $\mathcal{T}_{\theta}:=(\theta T_{\omega})_{\omega\in\Omega}.$ In this section, we obtain a formula for the average metric entropy over $\theta\in\mathbb{T}$ for the family of cocycles $(\mathcal{T}_{\theta})_{\theta\in\mathbb{T}}.$ This allows us to obtain information on the entropy without knowledge of particular orbits, in particular, without needing to find explicitly the invariant measure for each $\mathcal{T}_{\theta}.$

Let $T_{\omega, \theta}:=\theta T_{\omega}.$ The next lemma follows as in the deterministic case in \cite[Proposition 4.3]{expandPujals}. We include the proof for completeness.
\begin{lemma}\label{Leb_rand}
      Let $f:\mathbb{T}\to\mathbb{C}$ be a continuous function.  Then, for all $n\in\mathbb{N}, \omega\in\Omega$, $\int_{\mathbb{T}}\int_{\mathbb{T}} f\circ T_{\sigma^{-n}\omega,\theta}^{(n)} \,dm\,d\theta=\int_{\mathbb{T}}f\,dm.$
\end{lemma}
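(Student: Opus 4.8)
The plan is to carry out the double integral in the order $z$ then $\theta$. Integrating over $z$ first, Theorem \ref{poisson_aut} collapses the inner integral to the harmonic extension $\tilde f$ evaluated at a point that depends holomorphically on $\theta$; integrating over $\theta$ then reduces, via the mean value property for harmonic functions, to evaluating $\tilde f$ at $0$, which is exactly $\int_{\mathbb{T}}f\,dm$.

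In more detail: for fixed $\theta\in\mathbb{T}$, each $\theta T_{\sigma^{-k}\omega}$ is a finite Blaschke product (the unimodular factor $\theta$ is absorbed into the leading constant of Definition \ref{blaschke_defn}), and finite Blaschke products are closed under composition, so $T_{\sigma^{-n}\omega,\theta}^{(n)}=(\theta T_{\sigma^{-1}\omega})\circ\cdots\circ(\theta T_{\sigma^{-n}\omega})$ is itself a finite Blaschke product. Applying Theorem \ref{poisson_aut} with $x=0$ and using $P_{0}\equiv 1$ on $\mathbb{T}$, together with the definition of the harmonic extension (Proposition \ref{harmonic}), gives $\int_{\mathbb{T}}f\circ T_{\sigma^{-n}\omega,\theta}^{(n)}\,dm=\int_{\mathbb{T}}f\cdot P_{T_{\sigma^{-n}\omega,\theta}^{(n)}(0)}\,dm=\tilde f\bigl(T_{\sigma^{-n}\omega,\theta}^{(n)}(0)\bigr)$. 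Since $(\theta,z)\mapsto f(T_{\sigma^{-n}\omega,\theta}^{(n)}(z))$ is continuous, hence bounded, on $\mathbb{T}\times\mathbb{T}$, Fubini's theorem yields
$$\int_{\mathbb{T}}\int_{\mathbb{T}}f\circ T_{\sigma^{-n}\omega,\theta}^{(n)}\,dm\,d\theta=\int_{\mathbb{T}}\tilde f\bigl(\Psi(\theta)\bigr)\,d\theta,\qquad \Psi(\lambda):=T_{\sigma^{-n}\omega,\lambda}^{(n)}(0).$$
I would then show that $\Psi$ extends to a holomorphic self-map of the disc with $\Psi(0)=0$: regarding the parameter as a complex variable $\lambda\in\overline{D}$, the value $T_{\sigma^{-n}\omega,\lambda}^{(n)}(0)$ is obtained by successively applying the layers $w\mapsto\lambda\,T_{\sigma^{-k}\omega}(w)$ to $0$, and a short induction on $n$ shows that $\Psi$ is holomorphic on $D$, continuous on $\overline{D}$, satisfies $|\Psi(\lambda)|\le|\lambda|$ and in fact $\Psi(\overline{D})\subseteq D$ (the innermost term $\lambda\,T_{\sigma^{-n}\omega}(0)$ already lies in $D$, each further Blaschke factor maps $D$ into $D$, and the poles of the $T_{\sigma^{-k}\omega}$ lie outside $\overline{D}$, so no singularity is met), and $\Psi(0)=0$. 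By Lemma \ref{harmonic_holomorphic} the composition $\tilde f\circ\Psi$ is harmonic on $D$, and it is continuous on $\overline{D}$ since $\Psi(\overline{D})\subseteq D$. The mean value property for harmonic functions (equivalently, Proposition \ref{harmonic} applied to $(\tilde f\circ\Psi)|_{\mathbb{T}}\in C(\mathbb{T})$ and $P_{0}\equiv 1$) then gives $\int_{\mathbb{T}}\tilde f(\Psi(\theta))\,d\theta=(\tilde f\circ\Psi)(0)=\tilde f(0)=\int_{\mathbb{T}}f\,dm$, the last equality again because $P_{0}\equiv 1$. Combining with the displayed identity completes the proof.

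I do not expect a genuinely hard step here — the content is choosing the order of integration. The main point requiring a little care will be the verification that $\lambda\mapsto T_{\sigma^{-n}\omega,\lambda}^{(n)}(0)$ is holomorphic on $D$, continuous on $\overline{D}$, and valued in the open disc (so that $\tilde f\circ\Psi$ is defined and continuous up to the boundary), which is the short induction sketched above, tracking that the nested arguments stay strictly inside $D$ and away from the poles of the Blaschke factors. Everything else is a direct application of Theorem \ref{poisson_aut}, Lemma \ref{harmonic_holomorphic}, Proposition \ref{harmonic}, and Fubini, mirroring the deterministic argument of \cite[Proposition 4.3]{expandPujals}.
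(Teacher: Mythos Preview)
Your proposal is correct and follows essentially the same approach as the paper's proof: rewrite the inner integral as $\tilde f\bigl(T_{\sigma^{-n}\omega,\theta}^{(n)}(0)\bigr)$ via the Poisson kernel, observe that $\theta\mapsto T_{\sigma^{-n}\omega,\theta}^{(n)}(0)$ extends holomorphically to the disc and vanishes at $0$, and apply the mean value property to obtain $\tilde f(0)=\int_{\mathbb{T}}f\,dm$. Your version is in fact more explicit than the paper's about why $\Psi$ is holomorphic on $D$, continuous on $\overline{D}$, and takes values in the open disc, which the paper simply asserts.
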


\begin{proof}
    First, consider the inner integral. Recall that for a continuous function $f:\mathbb{T}\to\mathbb{C},$ $\tilde{f}$ denotes the harmonic extension of $f$ to the disc. We have \begin{align*}
    \int_{\mathbb{T}}f\circ T^{(n)}_{\sigma^{-n}\omega,\theta}\,dm&=\wtildea{f\circ T^{(n)}_{\sigma^{-n}\omega,\theta}}(0)\\&=\tilde{f}\circ T^{(n)}_{\sigma^{-n}\omega,\theta}(0), 
    \end{align*} where the first line follows from Proposition \ref{harmonic} and the second from Lemma \ref{harmonic_holomorphic}. 
    Note that $\theta\mapsto T^{(n)}_{\sigma^{-n}\omega,\theta}(0)$ is analytic in $D$ for each $\omega$ and $n,$ and $T_{\sigma^{-n}\omega,0}^{(n)}(0)=0.$ Since the average value with respect to Lebesgue of a harmonic function on the boundary of a disc is equal to the harmonic function evaluated at the center of the disc ($\theta=0$), it thus follows that \begin{align*}
            \int_{\mathbb{T}}\int_{\mathbb{T}}f\circ T_{\sigma^{-n}\omega,\theta}^{(n)}\,dm\,d\theta&=\int_{\mathbb{T}}\tilde{f}\circ T_{\sigma^{-n}\omega,\theta}^{(n)}(0)d\theta\\&=\Tilde{f}\circ T^{(n)}_{\omega,
        0}(0)\\&=\tilde{f}(0)\\&=\int_{\mathbb{T}}f\,dm.
        \end{align*}
\end{proof}

We now use Theorem \ref{buzzi_blaschke} and Lemma \ref{Leb_rand} to compute the average fibre entropy of $\mathcal{T}_{\theta}$ over $\theta\in\mathbb{T}.$
\begin{proposition}\label{average}
    Let $\mathcal{T}_{1}$ be an admissible Blaschke product cocycle. Let $\{f_{\omega}\}_{\omega\in\Omega}$ be a measurable family of functions, where for each $\omega\in\Omega,$ $f_{\omega}:\mathbb{T}\to\mathbb{C}$ is continuous. For fixed $\theta\in\mathbb{T},$ let $\mu_{\theta}$ be the $\mathcal{T}_{\theta}$-invariant measure as in Theorem \ref{buzzi_blaschke}. Then for $\mathbb{P}$-a.e. $\omega\in\Omega,$ $$\int_{\Omega}\int_{\mathbb{T}}\int_{\mathbb{T}}f_{\omega}\,d\mu_{\omega,\theta}\,d\theta \,d\mathbb{P}(\omega)=\int_{\Omega}\int_{\mathbb{T}}f_{\omega}\,dm\,d\mathbb{P}(\omega),$$ where $m$ denotes the Lebesgue measure.
\end{proposition}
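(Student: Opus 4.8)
The plan is to reduce the triple integral on the left to the double Lebesgue integral on the right by first identifying $\mu_{\omega,\theta}$ explicitly via Theorem~\ref{buzzi_blaschke}, then exploiting the dominated convergence theorem together with Lemma~\ref{Leb_rand}. By Theorem~\ref{buzzi_blaschke}\ref{main_thm_rfp} applied to the cocycle $\mathcal{T}_{\theta}$, for each fixed $\theta\in\mathbb{T}$ and $\mathbb{P}$-a.e.\ $\omega\in\Omega$ we have $\frac{d\mu_{\omega,\theta}}{dm}=P_{x_{\omega,\theta}}$ where $x_{\omega,\theta}=\lim_{n\to\infty}T^{(n)}_{\sigma^{-n}\omega,\theta}(0)$, and moreover $P_{T^{(n)}_{\sigma^{-n}\omega,\theta}(0)}\to P_{x_{\omega,\theta}}$ in $L^{\infty}$ (this is exactly the content of the proof of \ref{main_thm_rfp}, which shows $\mathcal{L}^{(n)}_{\sigma^{-n}\omega,\theta}1=P_{T^{(n)}_{\sigma^{-n}\omega,\theta}(0)}$). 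Consequently, for continuous $f_{\omega}$,
$$\int_{\mathbb{T}}f_{\omega}\,d\mu_{\omega,\theta}=\int_{\mathbb{T}}f_{\omega}\,P_{x_{\omega,\theta}}\,dm=\lim_{n\to\infty}\int_{\mathbb{T}}f_{\omega}\,P_{T^{(n)}_{\sigma^{-n}\omega,\theta}(0)}\,dm=\lim_{n\to\infty}\int_{\mathbb{T}}f_{\omega}\circ T^{(n)}_{\sigma^{-n}\omega,\theta}\,dm,$$
where the last equality uses Lemma~\ref{duality} (duality) together with $\mathcal{L}^{(n)}1=P_{T^{(n)}(0)}$, exactly as in the proof of Theorem~\ref{buzzi_blaschke}\ref{main_thm_rfp}.

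Next I would integrate over $\theta\in\mathbb{T}$ and then over $\omega\in\Omega$. The point is to pass the $\theta$-integral (and the $\omega$-integral) through the limit in $n$. Since $f_{\omega}$ is continuous on the compact set $\mathbb{T}$, it is bounded, so $\left|\int_{\mathbb{T}}f_{\omega}\circ T^{(n)}_{\sigma^{-n}\omega,\theta}\,dm\right|\le\|f_{\omega}\|_{\infty}$ uniformly in $n$ and $\theta$; hence by the dominated convergence theorem (in the variable $\theta$, with dominating constant $\|f_{\omega}\|_{\infty}$),
$$\int_{\mathbb{T}}\int_{\mathbb{T}}f_{\omega}\,d\mu_{\omega,\theta}\,d\theta=\int_{\mathbb{T}}\lim_{n\to\infty}\int_{\mathbb{T}}f_{\omega}\circ T^{(n)}_{\sigma^{-n}\omega,\theta}\,dm\,d\theta=\lim_{n\to\infty}\int_{\mathbb{T}}\int_{\mathbb{T}}f_{\omega}\circ T^{(n)}_{\sigma^{-n}\omega,\theta}\,dm\,d\theta=\int_{\mathbb{T}}f_{\omega}\,dm,$$
where the last step is Lemma~\ref{Leb_rand} applied with $f=f_{\omega}$. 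Finally, integrating this identity over $\omega\in\Omega$ against $\mathbb{P}$ gives the claim; measurability of $\omega\mapsto\int_{\mathbb{T}}\int_{\mathbb{T}}f_{\omega}\,d\mu_{\omega,\theta}\,d\theta$ follows from the measurability hypotheses on the family $\{f_{\omega}\}$ and on the cocycle (the latter needed to know $\omega\mapsto T^{(n)}_{\sigma^{-n}\omega,\theta}(0)$ is measurable), so Fubini applies and no further argument is needed on the right-hand side.

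The main obstacle I anticipate is the interchange of limit and $\theta$-integral, and more precisely making sure the almost-everywhere statements are handled uniformly: Theorem~\ref{buzzi_blaschke}\ref{main_thm_rfp} gives, for each fixed $\theta$, convergence for $\mathbb{P}$-a.e.\ $\omega$, but the exceptional null set could a priori depend on $\theta$; one wants the identity $\int_{\mathbb{T}}f_{\omega}\,d\mu_{\omega,\theta}=\lim_n\int_{\mathbb{T}}f_{\omega}\circ T^{(n)}_{\sigma^{-n}\omega,\theta}\,dm$ to hold for a.e.\ $\omega$ simultaneously for (Lebesgue-)a.e.\ $\theta$. This is resolved by a Fubini argument on $\Omega\times\mathbb{T}$ (in $(\omega,\theta)$): since for each $\theta$ the identity holds off a $\mathbb{P}$-null set, the set of "bad" pairs $(\omega,\theta)$ is $(\mathbb{P}\times m)$-null, so for $\mathbb{P}$-a.e.\ $\omega$ the identity holds for $m$-a.e.\ $\theta$, which is all that is needed since we only integrate $d\theta$. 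Alternatively, one can avoid pointwise-in-$\theta$ statements entirely by noting that $\int_{\mathbb{T}}f_{\omega}\circ T^{(n)}_{\sigma^{-n}\omega,\theta}\,dm$ is a bona fide function of $(\omega,\theta)$ for each $n$, integrating Lemma~\ref{Leb_rand} directly over $\Omega$, and only at the very end invoking Theorem~\ref{buzzi_blaschke}\ref{main_thm_diff_bound} (the $L^{\infty}$ convergence with a uniform rate $\rho^n$) to identify the limit as $\int_{\mathbb{T}}f_{\omega}\,d\mu_{\omega,\theta}$; the uniform rate conveniently sidesteps any delicate domination issue.
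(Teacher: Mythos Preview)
Your proposal is correct and follows essentially the same route as the paper: express $\int_{\mathbb{T}}f_{\omega}\,d\mu_{\omega,\theta}$ as $\lim_{n}\int_{\mathbb{T}}f_{\omega}\circ T^{(n)}_{\sigma^{-n}\omega,\theta}\,dm$ via Theorem~\ref{buzzi_blaschke}\ref{main_thm_rfp}, pass the limit through the $\theta$-integral by dominated convergence with bound $\|f_{\omega}\|_{\infty}$, apply Lemma~\ref{Leb_rand}, and integrate over $\Omega$. The only cosmetic difference is that the paper phrases the first step through the harmonic extension $\tilde f_{\omega}$ (writing $\int f_{\omega}P_{x}\,dm=\tilde f_{\omega}(x)$ and using continuity of $\tilde f_{\omega}$ on $D$), whereas you use the $L^{\infty}$ convergence of the Poisson kernels directly; your extra Fubini remark about the $\theta$-dependence of the null set is a point the paper leaves implicit.
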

\begin{proof}
    First, note that if $\mathcal{T}_{1}$ satisfies the hypothesis of Theorem \ref{buzzi_blaschke}, then so does $\mathcal{T}_{\theta}$ for every $\theta\in\mathbb{T}.$
    In particular, for $\mathbb{P}$-a.e. $\omega\in\Omega,$ we have the existence of $x_{\omega,\theta}:=\lim_{n\to\infty}T_{\sigma^{-n}\omega,\theta}^{(n)}(0).$ Recall that we can write $\int_{\mathbb{T}}f_{\omega}\circ T_{\sigma^{-n}\omega}^{(n)}\,dm=\int_{\mathbb{T}}f_{\omega}P_{T_{\sigma^{-n}\omega,\theta}^{(n)}(0)}\,dm.$  Let $\tilde{f}_{\omega}$ be the harmonic extension of $f_{\omega}$ as in Proposition \ref{harmonic}. Then, for $\mathbb{P}$-a.e. $\omega\in\Omega,$
    \begin{align*}
    \int_{\mathbb{T}}\int_{\mathbb{T}}f_{\omega}\,d\mu_{\omega,\theta}\,d\theta&=\int_{\mathbb{T}}\int_{\mathbb{T}}f_{\omega}P_{x_{\omega,\theta}}\,dm\,d\theta\\&=\int_{\mathbb{T}}\tilde{f}(x_{\omega,\theta})\,d\theta\end{align*} where the third line follows from the continuity of $f_{\omega}$ and $\lim_{n\to\infty}T_{\sigma^{-n}\omega,\theta}^{(n)}(0)=x_{\omega,\theta}.$ From Proposition \ref{harmonic}, it follows that for all $z\in D,$ $|\tilde{f}_{\omega}(z)|$ is bounded by $\sup_{z\in\mathbb{T}}|f_{\omega}(z)|<\infty.$ Thus, by dominated convergence,
    \begin{align*}\int_{\mathbb{T}}\tilde{f}(x_{\omega,\theta})\,d\theta&=\int_{\mathbb{T}}\lim_{n\to\infty}\tilde{f}_{\omega}(T_{\sigma^{-n}\omega,\theta}^{(n)}(0))\,d\theta\\&=\lim_{n\to\infty}\int_{\mathbb{T}}\tilde{f}_{\omega}(T_{\sigma^{-n}\omega,\theta}^{(n)}(0))\,d\theta.\end{align*} Lastly, we have \begin{align*}\lim_{n\to\infty}\int_{\mathbb{T}}\tilde{f}_{\omega}(T_{\sigma^{-n}\omega,\theta}^{(n)}(0))\,d\theta&=\lim_{n\to\infty}\int_{\mathbb{T}}\int_{\mathbb{T}}f_{\omega}\circ T_{\sigma^{-n}\omega,\theta}^{(n)}\,dm \,d\theta\\&=\int_{\mathbb{T}}f_{\omega}\,dm,
    \end{align*} where the first line follows from Proposition \ref{harmonic} and the second from Proposition \ref{Leb_rand}. Integrating both sides with respect to $\mathbb{P},$ the claim follows.
\end{proof}

Now, combining  Corollary \ref{skew_product_entropy} and Proposition \ref{average} we prove Theorem \ref{main_thm2}.
\begin{proof}[Proof of Theorem \ref{main_thm2}]
      
     By Corollary \ref{skew_product_entropy}, the fiber entropy of $\mathcal{T}_{\theta}$ is  $$h^{fib}_{\mu_{\theta}}(\mathcal{T}_{\theta})=\int_{\mathbb{T}}\int_{\mathbb{T}}\log|T'_{\omega}(z)|\,d\mu_{\omega,\theta}\,d\mathbb{P}(\omega),$$ where $d\mu_{\omega,\theta}=P_{x_{\omega,\theta}}dm$ and $(x_{\omega,\theta})_{\omega\in\Omega}$ is the random fixed point of $\mathcal{T}_{\theta}.$ Then the average metric entropy over $\theta\in\mathbb{T}$ is given by  $$\bar{h}(\mathcal{T})=\int_{\mathbb{T}}\int_{\Omega}\int_{\mathbb{T}}\log|T'_{\omega}(z)|\,d\mu_{\omega,\theta}\,d\mathbb{P}(\omega)\,d\theta+h_{\mathbb{P}}(\sigma).$$ Applying Fubini's theorem to this integral, we have $$\bar{h}(\mathcal{T})_=\int_{\Omega}\int_{\mathbb{T}}\int_{\mathbb{T}}\log|T'_{\omega}(z)|\,d\mu_{\omega,\theta}\,d\theta \,d\mathbb{P}(\omega)+h_{\mathbb{P}}(\sigma).$$ Consider $f_{\omega}(z)=\log|T'_{\omega}(z)|$ on $\mathbb{T}$. Since $|T_{\omega}'(z)|>0$ for all $z\in\mathbb{T},$ clearly $f_{\omega}$ is continuous and bounded. Thus, $f_{\omega}$ satisfies the hypotheses of Proposition $\ref{average}.$
     Applying Proposition \ref{average} with $f_{\omega}(z)=\log|T'_{\omega}(z)|$, we have \begin{align*}
        \bar{h}(\mathcal{T})=\int_{\Omega}\int_{\mathbb{T}}\log|T_{\omega}'(z)|\,dm \,d\mathbb{P}(\omega)+h_{\mathbb{P}}(\sigma).
    \end{align*}
\end{proof}
\begin{remark}
    $h_{\mu}^{fib}(\mathcal{T})$ is equal to the Lyapunov exponent of $\mathcal{T}$ with respect to $\mu.$ Theorem \ref{main_thm2} shows that for a family of admissible Blaschke product cocycles $(\mathcal{T}_{\theta})_{\theta\in\mathbb{T}},$ the average Lyapunov exponent of $\mathcal{T}_{\theta}$ with respect to $\mu_{\theta}$ over $\theta\in\mathbb{T}$ is equal to the average logarithmic expansion of $\mathcal{T}$ on $\mathbb{T}.$
\end{remark}

\section{Examples}\label{examples}

In this section we will give an example of an admissible Blaschke product cocycle and compute its average measure theoretic entropy. We will also give an example of an inadmissible Blaschke product cocycle which satisfies the conclusions of Corollary \ref{skew_product_entropy} and Theorem \ref{main_thm2}, showing that the conditions on an admissible Blaschke are sufficient but not necessary for these conclusions to hold.

In the following example, the admissible Blaschke product cocycle is not in the setting of Theorem \ref{gtq_rfp}. In particular, it is expanding on average but not uniformly expanding, and so does not satisfy $r_{\mathcal{T}}(R)<R$ for any $R<1.$
\begin{example}
    Consider a Blaschke product cocycle $\mathcal{T}:=\mathcal{T}_{1}$ comprised of two maps $T_{0}=z^2$ and $T_{1}=-\left(\frac{z-0.4}{1-0.4z}\right)^2$ with $\mathbb{P}(\omega:T_{\omega}=T_{0})=p=1-\mathbb{P}(\omega:T_{\omega}=T_{1}).$ Figure \ref{fig:interval} shows $S_{1}=Q^{-1}T_{1}Q$ as in Definition \ref{blaschke_defn}. $T_{0}$ is uniformly expanding on $\mathbb{T}$ with $|T_{0}'(z)|=2$ for all $z\in\mathbb{T}.$ $T_{1}$ is not uniformly expanding on $\mathbb{T}$ since $\infzt|T_{1}'(z)|=\frac{6}{7},$ and it has an attracting fixed point at $z=-1.$ $\mathcal{T}$ is an admissible Blaschke product cocycle if $p\log2+(1-p)\log\frac{6}{7}>0,$ that is, $p>\frac{\log(6/7)}{\log(3/7)}\approx0.182.$ Take, for example, $p=0.2.$ Applying Theorem \ref{main_thm2} to the family $(\mathcal{T}_{\theta})_{\theta\in\mathbb{T}}$, we have \begin{align*}
        \bar{h}(\mathcal{T})&=0.2\int_{\mathbb{T}}\log|T_{0}'(z)|\,dm+0.8\int_{\mathbb{T}}\log|T_{1}'(z)|\,dm+h_{\mathbb{P}}(\sigma)\\&=0.2\int_{\mathbb{T}}\log2\,dm+0.8\int_{\mathbb{T}}\log\left(2\frac{1-0.4^2}{|z-0.4|^2}\right)\,dm+h_{\mathbb{P}}(\sigma)\\&\approx0.553664+h_{\mathbb{P}}(\sigma).
    \end{align*} 

    For fixed $\theta=e^{2\pi it},$ let $T_{i}:=T_{i,\theta}, i=0,1.$ Figure \ref{h_theta_av} shows a numerical approximation of $h^{fib}_{\mu_{\theta}}(\mathcal{T}_{\theta})$ over $t\in[0,1)$ for two choices of driving $\sigma_{1}, \sigma_{2}:$ 
    \begin{enumerate}
        \item  $\Omega_{1}=\{0,1\}^{\mathbb{Z}}$ with $\mathbb{P}([0\cdot])=0.2=1-\mathbb{P}([1\cdot]).$ $\sigma_{1}$ is the left shift where $\omega:=[...\omega_{-1}\omega_{0}\omega_{1}...]$ and $T_{\omega}:=T_{\omega_{0}};$ and
        \item  $\Omega_{2}=[0,1)$ and $\sigma_{2}\omega=\omega+\frac{1}{\pi}\text{(mod 1)}.$ If $\omega<0.2$ then $T_{\omega}=T_{0},$ otherwise $T_{\omega}=T_{1}.$
    \end{enumerate}
    
    The numerical approximation of the average over $\theta\in\mathbb{T}$ is $0.5531$ for $(\mathcal{T},\sigma_{1})$ and $0.5533$ for $(\mathcal{T},\sigma_{2})$ over 10,000 steps, an error of approximately $0.10\%$ and $0.07\%$ respectively.

\begin{figure}
    \centering
    \includegraphics[width=0.8\linewidth]{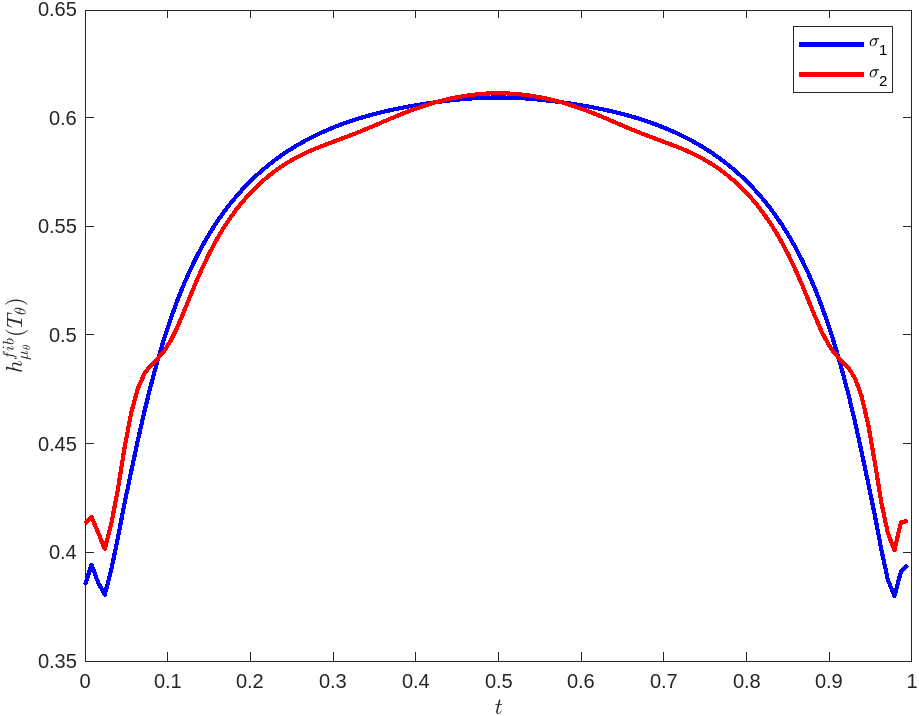}
    \caption{The plot shows $t$ vs $h_{\mu_{\theta}}^{fib}(\mathcal{T}_{\theta}),$ where $\theta=e^{2\pi i t},$  for $(\mathcal{T}_{\theta},\sigma_{1})$ and $(\mathcal{T}_{\theta}, \sigma_{2}).$}
    \label{h_theta_av}
        
\end{figure}

\end{example}

    Although Corollary \ref{skew_product_entropy} covers a broad class of random Blaschke products, including many cocycles that satisfy Theorem \ref{gtq_rfp}, it is possible to construct an example of an inadmissible Blaschke product cocycle where the conclusions of Corollary \ref{skew_product_entropy} and Theorem \ref{main_thm2} still hold.

\begin{example}\label{origin_ex}
    Fix $c>0.$ Let $\Omega=[0,1),$ $\mathbb{P}=m$ and $\sigma\omega=\omega+\alpha\text{ (mod 1)}$ for some fixed $\alpha\not\in\mathbb{Q}.$ For $j\in\mathbb{N},$ let $\Omega_{j}$ be such that $\Omega=\bigcup_{j}\Omega_{j}$ and $\mathbb{P}(\Omega_{j})=\frac{6}{\pi^2(j+1)^2}.$ Let $(\mathcal{T},\sigma)$ be a Blaschke product cocycle consisting of Blaschke products fixing the origin with real zeroes, in particular, for each $\omega\in\Omega_{j}$ let $n_{\omega}=(j+1)^2,$ $a_{i,\omega}\in(-1,1)$ and $$T_{\omega}(z)=\rho_{\omega} z\prod_{i=2}^{n_{\omega}}\frac{z-a_{i,\omega}}{1-a_{i,\omega}z}.$$ Furthermore, for every $j$ such that $j^2-2j\ge c$ assume that $$\inf_{\substack{2\le i\le (j+1)^2\\\omega\in\Omega_{j}}}|a_{i,\omega}|\ge\frac{j^2-2j-c}{j^2-2j+c}.$$  Then, $\mathcal{T}$ is not admissible, but the conclusions of Corollary \ref{skew_product_entropy} and Theorem \ref{main_thm2} hold with $\mu=m\times\mathbb{P}.$ In particular, $$h_{\mu}(\mathcal{T})=\int_{\Omega}\int_{\mathbb{T}}\log|T_{\omega}'|\,dm\,d\mathbb{P}(\omega)+h_{\mathbb{P}}(\sigma),$$ but $$\int_{\Omega}\frac{n_{\omega}}{\infzt|T_{\omega}'(z)|}\,d\pw=\infty.$$
\end{example}
\begin{proof}
     First, note that each $T_{\omega}$ is expanding since $$|T'_{\omega}(z)|=\sum_{i=1}^{n_{\omega}}\frac{1-|a_{i,\omega}|^2}{|z-a_{i,\omega}|^2}\ge1+\sum_{i=2}^{n_{\omega}}\frac{1-|a_{i,\omega}|}{1+|a_{i,\omega}|}>1, \forall z\in\mathbb{T}.$$ 

    For $\mathbb{P}$-a.e. $\omega\in\Omega,$ it follows that $r_{T_{\omega}}(R)<R$ for any choice of $0<R<1$ (see Propositions 4.4, 4.5 and Corollary 4.6 \cite{JPGT}). Applying Theorem \ref{gtq_rfp}, we have for $\mathbb{P}$-a.e. $\omega\in\Omega,$ $$x_{\omega}=\lim_{n\to\infty}T^{(n)}_{\sigma^{-n}\omega}(z)=0, \forall z\in D,$$ and $\mu_{\omega}=m.$ 

    Then, we have $$h_{\mu}^{fib}(\mathcal{T})=\int_{\Omega}\int_{\mathbb{T}}\log|T_{\omega}'|\,dm\,d\mathbb{P}(\omega),$$ and thus $$h_{\mu}(\mathcal{T})=\int_{\Omega}\int_{\mathbb{T}}\log|T_{\omega}'|\,dm\,d\mathbb{P}(\omega)+h_{\mathbb{P}}(\sigma).$$

    Note that since the zeroes are real, for fixed $\omega$ and $1\le i\le n_{\omega},$ we have $\sup_{z\in\mathbb{T}}|z-a_{i,\omega}|=1+|a_{i,\omega}|.$ We thus have \begin{align*}
        \infzt|T_{\omega}'(z)|&=1+\infzt\sum_{i=2}^{n_{\omega}}\frac{1-|a_{i,\omega}|^2}{|z-a_{i,\omega}|^2}\\&\le1+(n_{\omega}-1)\sup_{i=2,..., n_{\omega}}\infzt\frac{1-|a_{i,\omega}|^2}{|z-a_{i,\omega}|^2}\\&=1+(n_{\omega}-1)\sup_{i=2,..., n_{\omega}}\frac{1-|a_{i,\omega}|^2}{(1+|a_{i,\omega}|)^2}\\&=1+(n_{\omega}-1)\sup_{i=2,..., n_{\omega}}\frac{1-|a_{i,\omega}|}{1+|a_{i,\omega}|}\\&=1+(n_{\omega}-1)\frac{1-\inf_{i=2,..., n_{\omega}}|a_{i,\omega}|}{1+\inf_{i=2,..., n_{\omega}}|a_{i,\omega}|}.
    \end{align*} If $c> n_{\omega}-1$ then trivially $\infzt|T_{\omega}'(z)|\le n_{\omega}< c+1.$ If $c\le n_{\omega}-1,$ then the hypothesis $\inf_{i=2,..., n_{\omega}}|a_{i,\omega}|\le\frac{n_{\omega}-1-c}{n_{\omega}-1+c},$ gives $$\infzt|T_{\omega}'(z)|\le1+(n_{\omega}-1)\frac{1-\inf_{i=2,..., n_{\omega}}|a_{i,\omega}|}{1+\inf_{i=2,..., n_{\omega}}|a_{i,\omega}|}\le c+1.$$

    Thus, $\mathcal{T}$ is not admissible since \begin{align*}
        \int_{\Omega}\frac{n_{\omega}}{\infzt|T_{\omega}'(z)|}\,d\pw&\ge\frac{1}{c+1}\int_{\Omega}n_{\omega}\,d\pw\\&=\frac{1}{c+1}\sum_{n=1}^{\infty}n\mathbb{P}(n_{\omega}=n)\\&=\frac{1}{c+1}\sum_{j=1}^{\infty}(j+1)^2\mathbb{P}(n_{\omega}=(j+1)^2)\\&=\frac{6}{\pi^2(c+1)}\sum_{j=1}^{\infty}\frac{(j+1)^2}{j^2}=\infty.
    \end{align*}

Lastly, note that for every $\theta\in\mathbb{T},$ the same hypotheses hold for $\mathcal{T}_{\theta}$. Thus, $h_{\mu}(\mathcal{T}_{\theta})=h_{\mu}(\mathcal{T})$ and clearly $\bar{h}(\mathcal{T})=h_{\mu}(\mathcal{T}),$ but each $\mathcal{T}_{\theta}$ is not admissible.
\end{proof}

\section*{Acknowledgments}
The authors thank Enrique Pujals for encouraging us to pursue this project. CGT and RO acknowledge support from the Australian Research Council. RO acknowledges the Australian Government Research Training Program
for financial support.

\printbibliography

\end{document}